\theoremstyle{definition}
\newtheorem{thm}{Theorem}[section]
\newtheorem{lemma}[thm]{Lemma}
\newtheorem{defn}[thm]{Definition}
\newtheorem{cor}[thm]{Corollary}
\newtheorem{rmk}[thm]{Remark}
\newtheorem{ex}[thm]{Example}
\def\droop{\operatorname{min-droop}}
\def\swap{\operatorname{cross-bump-swap}}
\def\BPD{\operatorname{BPD}}
\def\rect{\operatorname{rect}}
\def\jdt{\operatorname{jdt}}
\def\words{\operatorname{words}}
\def\+{\includegraphics[scale=0.4]{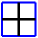}}
\def\bl{\includegraphics[scale=0.4]{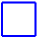}}
\def\bt{\includegraphics[scale=0.4]{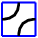}}
\def\rt{\includegraphics[scale=0.4]{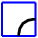}}
\def\jt{\includegraphics[scale=0.4]{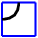}}
\def\mindroop{\textsf{min-droop}}
\def\maxdroop{\textsf{max-droop}}
\def\maxd{\operatorname{max-droop}}
\def\cbswap{\textsf{cross-bump-swap}}
\def\recdroop{\textsf{rec-droop}}
\def\recd{\operatorname{rec-droop}}
\def\recundroop{\textsf{rec-undroop}}
\def\recund{\operatorname{rec-undroop}}
\def\uncross{\textsf{uncross}}
\def\cross{\textsf{cross}}
\def\crss{\operatorname{cross}}
\def\rect{\operatorname{rect}}
\def\pop{\operatorname{pop}}
\title{Growth Diagrams for Schubert RSK}
\date{}
\author{Daoji Huang and Son Nguyen}
\thanks{DH was supported by NSF DMS-2202900. SN was partially supported by the University of Minnesota's Office of Undergraduate Research.}
\begin{document}
\begin{abstract}
    Motivated by classical combinatorial Schubert calculus on the Grassmannian, Huang--Pylyavskyy introduced a generalized theory of Robinson-Schensted-Knuth (RSK) correspondence for studying Schubert calculus on the complete flag variety via insertion algorithms. The inputs of the correspondence are certain biwords, the insertion objects are bumpless pipe dreams, and the recording objects are certain chains in Bruhat order. In particular, they defined plactic biwords and showed that classical Knuth relations can be generalized to plactic biwords. In this paper, we give an analogue of Fomin's growth diagrams for this generalized RSK correspondence on plactic biwords. We show that this growth diagram recovers the bijection between pipe dreams and bumpless pipe dreams of Gao--Huang.
\end{abstract}
\maketitle
\section{Introduction}

The general philosophy of a \emph{growth diagram} can be thought of as translating a temporal object, i.e., an algorithm, to a spatial object, i.e., a diagrammatic encoding of the algorithm, so as to provide a powerful tool to study the algorithm, as well as an interface between combinatorial algorithms and algebraic or geometric phenomena.\footnote{We learned this philosophy from Allen Knutson.} The most classical example of a growth diagram is of the classical Robinson-Schensted (RS) correspondence, a bijection between a permutation and a pair of standard Young tableaux. 
The Robinson-Schensted-Knuth (RSK) correspondence is a generalization of the RS correspondence and is of central importance in symmetric function theory. Each variation of these correspondences has its corresponding growth diagram version.  
The RS correspondence is originally defined as an insertion algorithm on pairs of standard tableaux. The algorithm iteratively scans the permutation, inserting each time a number to the insertion tableaux, and records the position of the new entry in the recording tableaux. The growth diagram first introduced by Fomin \cite{fomin1994duality,fomin1995schensted}, however, is a two dimensional grid that can be roughly thought of as an ``enriched'' permutation matrix, with the extra information determined by certain local ``growth rules.'' Although far from apparent at a first glance, the growth diagram is a lossless encoding of the insertion algorithm. Furthermore, the growth diagram manifests many non-obvious properties of the insertion algorithm. For example, the property $w\xleftrightarrow{RS}(P,Q)$ implies $w^{-1}\xleftrightarrow{RS}(Q,P)$ can be easily seen by transposing the growth diagram. 

It is possible to give the RSK correspondence an operator theoretic interpretation through growth diagrams and, as a consequence, obtain a noncommutative version of Cauchy's identity \cite{fomin1995schur}. Furthermore, growth diagrams for the RS correspondence has beautiful geometric and representation-theoretic interpretations \cite{van2000flag,rosso2010robinson,steinberg1988occurrence}. 

Beyond classical RSK, there are many examples in the literature of expressing combinatorial algorithms using growth diagrams, see, e.g., \cite{lam2010affine,lenart2010growth,patrias2018dual,thomas2009jeu}.

In \cite{HP} and \cite{huang2023knuth}, the first author and Pylyavskyy introduced a generalization of the classical RSK correspondence for Schubert polynomials, called bumpless pipe dream (BPD) RSK. For the definition  of bumpless pipe dreams, we refer the readers to \cite{LLS}.  As in the classical case, this generalization of RSK is defined via insertion algorithms. The algorithm takes as input a certain biword, iteratively inserts it into a bumpless pipe dream, and records the insertion via chains in mixed $k$-Bruhat order. An analogue of Knuth moves was discovered for a more restrictive set of biwords, called \emph{plactic biwords}. It is then natural to pursue a growth diagram version of this generalized RSK correspondence on plactic biwords. In this paper, we describe these new growth diagrams for the RSK correspondence for plactic biwords. As an application, our growth diagram manifests the canonical bijection between pipe dreams and bumpless pipe dreams of the first author and Gao \cite{gao2023canonical}. We also hope that this opens up a venue for connecting the combinatorics of this generalized RSK to its algebraic or geometric interpretations.
\vskip 0.5em
\noindent\textbf{Acknowledgements.} We thank Pavlo Pylyavskyy for helpful conversations.

\section{Plactic biwords and growth rules}
\subsection{Generalized Knuth relations on plactic biwords}
In this paper, we inherit the convention of \cite{HP} and use $\BPD(\pi)$ to denote the set of BPDs for permutation $\pi$.
\begin{defn}[\cite{huang2023knuth}]
\label{def:assoc-biwords}
A biletter is a pair of positive integers $\binom{a}{k}$ where $a\le k$.
A \textbf{plactic biword} is a word of biletters $\binom{\mathbf{a}}{\mathbf{k}}=\binom{a_1,\cdots, a_\ell}{k_1,\cdots, k_\ell}$, where $k_i\ge k_{i+1}$ for each $i$. 
\end{defn}

\begin{defn}[\cite{huang2023knuth}]
\label{def:knuth}
We define the \textbf{generalized Knuth relations} on plactic biwords as follows:
\begin{enumerate}
    \item[(1)] $\left(\begin{smallmatrix}\cdots & b & a & c & \cdots \\ \cdots & k & k & k & \cdots\end{smallmatrix}\right)\sim \left(\begin{smallmatrix}\cdots & b & c & a & \cdots \\ \cdots & k & k & k & \cdots\end{smallmatrix}\right)$ if $a<b\le c$
    \item[(2)]$\left(\begin{smallmatrix}\cdots & a & c & b & \cdots \\ \cdots & k & k & k & \cdots\end{smallmatrix}\right)\sim \left(\begin{smallmatrix}\cdots & c & a & b & \cdots \\ \cdots & k & k & k & \cdots\end{smallmatrix}\right)$ if $a\le b< c$
    \item[(3)] $\left(\begin{smallmatrix}\cdots & a & b & \cdots \\ \cdots & k & k & \cdots\end{smallmatrix}\right)\sim \left(\begin{smallmatrix}\cdots & a & b & \cdots \\ \cdots & k+1 & k & \cdots\end{smallmatrix}\right)$ if $a\le b$
    \item[(4)] $\left(\begin{smallmatrix}\cdots & b & a & \cdots \\ \cdots & k+1 & k+1 & \cdots\end{smallmatrix}\right)\sim \left(\begin{smallmatrix}\cdots & b & a & \cdots \\ \cdots & k+1 & k & \cdots\end{smallmatrix}\right)$ if $a< b$.
\end{enumerate}

\end{defn}
Notice that these relations are only defined on plactic biwords. We do not apply the relation (3) or (4) if the resulting word is no longer plactic.

Recall that a given biword $Q=\binom{a_1,\cdots,a_\ell}{k_1,\cdots,k_\ell}$, \cite{HP} defines a map $\mathcal{L}(Q)=(\varphi_L(Q), ch_L(Q))$ where $\varphi_L(Q)$ is BPD obtained by reading $Q$ from right to left and successively performing left insertion, and 
$ch_L(Q)$ is the recording chain in mixed $k$-Bruhat order with edge labels $k_\ell,\cdots, k_1$,
as well as a map $\mathcal{R}(Q)=(\varphi_R(Q), ch_R(Q))$ where $\varphi_R(Q)$ is the BPD obtained by reading $Q$ from left to right and successively performing right insertion, and $ch_R(Q)$ is the recording chain in mixed $k$-Bruhat order with edge labels $k_1,\cdots, k_\ell$.
For details of these insertion algorithms see \cite[Section 3]{HP}. We recall the right insertion algorithm in Section~\ref{sec:insert-jdt}. Furthermore, by \cite[Proposition 1.2]{huang2023knuth}, the insertion BPD is well-defined regardless of the choice of insertion algorithms, so we write $\varphi(D):=\varphi_R(D)=\varphi_L(D)$. For the analysis of the insertion algorithm in this paper we use $\mathcal{R}$, the right insertion algorithm.
\begin{thm}[\cite{huang2023knuth}]
\label{thm:main}
For any $D\in\BPD(\pi)$, the set of plactic biwords
\[\words(D):=\{Q: \varphi(Q)=D\}\] is connected by the generalized Knuth relations.
\end{thm}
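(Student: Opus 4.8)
The plan is to show that the map $\varphi$ is invariant under each of the four generalized Knuth relations, and then that any two plactic biwords with the same insertion BPD can be connected by these relations. The first direction—that $Q \sim Q'$ implies $\varphi(Q) = \varphi(Q')$—should be the more routine half: one checks for each relation (1)--(4) that applying the right-insertion algorithm $\mathcal{R}$ to the two sides produces the same BPD. Relations (1) and (2) take place within a single column-value $k$, so they reduce essentially to the classical Knuth relations on the subword with second coordinate $k$; relations (3) and (4) adjust the $k$-labels, and here I would verify directly from the definition of the insertion algorithm in Section~\ref{sec:insert-jdt} that moving a $k+1$ down to $k$ (when the plactic and inequality conditions hold) commutes with insertion. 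The content of this direction is that Knuth moves are precisely the ambiguities of the insertion order that insertion itself washes out.

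\medskip

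The harder and more substantial direction is the converse: if $\varphi(Q) = \varphi(Q') = D$, then $Q$ and $Q'$ are connected by generalized Knuth relations. The natural strategy is induction on the length $\ell$ of the biwords, peeling off biletters. Because plactic biwords are sorted so that $k_1 \ge k_2 \ge \cdots \ge k_\ell$, I would first argue that within a fixed maximal value $k$, the relations (1) and (2) let us freely rearrange the $a$-entries into any Knuth-equivalent order, so that the subword at level $k$ behaves like a classical word whose insertion is controlled by the classical RSK/plactic theory. The key reduction is then to use relations (3) and (4) to normalize the boundary between consecutive $k$-levels: I would try to show that, up to generalized Knuth equivalence, every biword in $\words(D)$ can be brought to a canonical normal form determined by $D$ alone—for instance, a lexicographically extremal representative, or one read off directly from a distinguished insertion path in $D$.

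\medskip

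Concretely, I expect the engine of the converse to be a ``last biletter'' or ``reverse insertion'' argument: given $D = \varphi(Q)$, reverse-insertion from $D$ should recover a well-defined final biletter (or a Knuth-equivalence class of possible final biletters), and one shows that any $Q$ with $\varphi(Q)=D$ can be brought by relations (1)--(4) to a form with this prescribed last biletter, whereupon stripping it off reduces to a smaller BPD $D'$ and a shorter biword, to which the inductive hypothesis applies. The compatibility of this peeling with the plactic constraint $k_i \ge k_{i+1}$ is exactly what relations (3) and (4) are designed to handle, since they are the only moves that can shift a biletter across a change in $k$-value.

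\medskip

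The main obstacle, I anticipate, is establishing that the relations are \emph{sufficient}—that there are no further, genuinely distinct biwords in a fiber $\words(D)$ that the four relations fail to reach. In the classical setting this is the statement that plactic equivalence equals Knuth equivalence, proved via a normal-form (row-reading-word of the tableau) argument. Here the difficulty is compounded by the second coordinate: I need a canonical representative of each fiber that is manifestly reachable from any element by relations (1)--(4), and I must verify that the reverse-insertion procedure interacts correctly with the $k$-labels so that the normalization does not leave the class of plactic biwords. Controlling this interface—ensuring that the descent of $k$-labels via (3) and (4) can always be arranged to be reversible and to commute with the value-level Knuth moves—is where I expect the real work to lie, and it likely requires a careful case analysis of how an inserted biletter's label and value jointly determine its insertion path in the BPD.
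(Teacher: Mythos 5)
This theorem is quoted from \cite{huang2023knuth} and is not proved in the present paper at all---it is imported as a known result---so there is no in-paper argument to compare your proposal against. Judged on its own terms, what you have written is a plan rather than a proof: both halves of the standard architecture (invariance of $\varphi$ under the relations, and connectedness of each fiber) are described but not executed, and the places where you say ``I would verify directly'' or ``I would try to show'' are precisely where the entire content of the theorem lives. In particular, your claim that the first direction is ``the more routine half'' undersells it badly: relations (3) and (4) change the label $k$, and $k$ enters the insertion algorithm through the termination condition (the check of whether the pipe $p$ exits from a row $r\le k$, equivalently the condition $\pi^{-1}(\alpha)\le k<\pi^{-1}(\beta)$ on the resulting transposition), so changing $k$ can in principle change where the insertion path terminates. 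Showing it does not, under exactly the hypotheses $a\le b$ (resp.\ $a<b$) and the plactic constraint, requires a genuine analysis of insertion paths of the kind carried out in Section 4 of this paper for a different purpose; it is not a formal check.

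The converse direction has a more structural gap. Your peeling strategy needs a canonical representative of $\words(D)$ that is (i) well defined from $D$ alone and (ii) provably reachable from an arbitrary element of the fiber by relations (1)--(4). You name neither the normal form nor the mechanism for reaching it. The ``last biletter'' idea is particularly underdetermined here because the last biletter carries the minimal label $k_\ell$, and relations (3) and (4) move labels across biletters, so the set of possible final biletters of elements of $\words(D)$ is not controlled by classical reverse bumping; moreover, Lemma \ref{lem:remove_i} of this paper and the remark following it show that deletion operations on these biwords interact with Knuth classes asymmetrically (removing small values preserves classes, removing large values does not), which is a warning that the inductive peeling must be set up with care about \emph{which} biletter is stripped. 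As it stands, the proposal correctly identifies the shape of the argument but proves neither implication, so it cannot be accepted as a proof of the theorem.
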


For a biword $Q$, we define $Q_{> i}$ to be the biword obtained from $Q$ by removing all biletters $\binom{a_j}{k_j}$ with $a_j \leq i$. In particular, $Q_{>0}$ is $Q$. We have the following lemma.

\begin{lemma}\label{lem:remove_i}
    Suppose $Q$ and $Q'$ are connected by the generalized Knuth relations, then for all $i$, $Q_{>i}$ and $Q'_{>i}$ are connected by the generalized Knuth relations.
\end{lemma}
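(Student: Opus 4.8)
The plan is to reduce to a single generalized Knuth move and then close by a finite case analysis. Being connected by the generalized Knuth relations is an equivalence relation, so if $Q=Q^{(0)}\sim Q^{(1)}\sim\cdots\sim Q^{(m)}=Q'$ is a chain of single moves, it suffices to prove that each consecutive pair $Q^{(t)}_{>i}$ and $Q^{(t+1)}_{>i}$ is connected; concatenating these chains then connects $Q_{>i}$ to $Q'_{>i}$. So I would fix $i$ and assume $Q'$ is obtained from $Q$ by one application of one of (1)--(4).

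For such a single move, write $Q=U\,W\,V$ and $Q'=U\,W'\,V$, where the relation replaces the consecutive window $W$ by $W'$ while the prefix $U$ and suffix $V$ are common to both. Since deleting the biletters with top entry $\le i$ commutes with concatenation and depends only on the top row, we get $Q_{>i}=U_{>i}\,W_{>i}\,V_{>i}$ and $Q'_{>i}=U_{>i}\,W'_{>i}\,V_{>i}$ with the \emph{same} $U_{>i}$ and $V_{>i}$. Hence everything comes down to comparing $W_{>i}$ with $W'_{>i}$. I would also record at the outset that any subword of a plactic biword is plactic (a subsequence of a weakly decreasing bottom row is weakly decreasing), so $Q_{>i}$ and $Q'_{>i}$ are genuine plactic biwords and any move applied to them is admissible.

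The core of the argument is a dichotomy on the size of $i$ relative to the top entries in $W$. Each relation involves top entries with a designated smallest one (namely $a$ in (1), (3), (4), and $a$ or $b$ in (2)). If $i$ is strictly smaller than every top entry of the window, no biletter in $W$ or $W'$ is deleted, so $W_{>i}=W$ and $W'_{>i}=W'$; the defining inequalities and the bottom entries are untouched, so $Q_{>i}$ and $Q'_{>i}$ differ by the very same relation (with the window now embedded in the deleted context), and adjacency is preserved because deletion never separates retained neighbors. Otherwise the smallest top entry is deleted, and a short check of each relation shows that $W_{>i}$ and $W'_{>i}$ then become literally identical, so $Q_{>i}=Q'_{>i}$: for instance in (1), once $a$ is removed both windows reduce to $\binom{b}{k}\binom{c}{k}$ (and removing $a,b$ leaves $\binom{c}{k}$ on both sides), and in (2) removing $a$ leaves $\binom{c}{k}\binom{b}{k}$ on both sides.

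The one point deserving care --- and the closest thing to an obstacle --- is relations (3) and (4), where $W$ and $W'$ already share the same top row and differ only in one bottom entry ($k$ versus $k+1$). I would verify that the biletter carrying this discrepancy is exactly the one removed in every collapse case. This holds because in (3) the discrepancy sits on the biletter with top entry $a\le b$, and in (4) on the biletter with top entry $a<b$; in both cases that biletter has the smaller top entry, so any threshold $i$ that deletes anything deletes it first, leaving the identical retained biletter $\binom{b}{k}$ (respectively $\binom{b}{k+1}$) on both sides. With this observation every branch of the case analysis yields either $Q_{>i}\sim Q'_{>i}$ by the same relation or $Q_{>i}=Q'_{>i}$, completing the proof.
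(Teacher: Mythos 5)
Your proof is correct and rests on the same key observation as the paper's: deleting the biletter carrying the smallest top entry of the window (the $a$-biletter, which in relations (3) and (4) is exactly the one whose bottom entry differs) collapses both sides to the same word, while deleting nothing from the window leaves the relation intact. The paper phrases this as an iterative removal of values $1,2,\ldots,i$ rather than your one-shot threshold dichotomy, but the argument is essentially identical.
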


\begin{proof}
    It suffices to consider the case where $Q$ and $Q'$ are connected by one generalized Knuth relation. Observe that in all relations, if we remove the biletters $\binom{a}{k}$ and $\binom{a}{k+1}$, then the remaining biwords are the same. Thus, we can iteratively remove all biletters $\binom{1}{*}, \binom{2}{*},\ldots,\binom{i}{*}$, and after each step, either the remaining biwords are connected by the same generalized Knuth relation or they are the same biword.
 
\end{proof}

As a result of Lemma \ref{lem:remove_i}, for any $D\in \BPD(\pi)$ and any $i$, the set of plactic words $\{ Q_{> i} ~|~ Q \in \words(D) \}$ is also connected by the generalized Knuth relations. Therefore, for any $Q\in\words(D)$, $\varphi(Q_{>i})$ is the same BPD. 

\begin{rmk}
    One could similarly define $Q_{<i}$ to be the biword obtained from $Q$ by removing all biletters $\binom{a_j}{k_j}$ with $a_j \geq i$ and ask if $Q \sim Q'$ implies $Q_{<i} \sim Q'_{<i}$ for all $i$. The answer is unfortunately no. One small example is $\left(\begin{smallmatrix} 1 & 3 & 2 \\ 3 & 3 & 3 \end{smallmatrix}\right)\sim \left(\begin{smallmatrix} 1 & 3 & 2 \\ 3 & 3 & 2 \end{smallmatrix}\right)$ but $\left(\begin{smallmatrix} 1 & 2 \\ 3 & 3 \end{smallmatrix}\right)$ and $\left(\begin{smallmatrix} 1 & 2 \\ 3 & 2 \end{smallmatrix}\right)$ are not connected by generalized Knuth relations. The reason is that if $Q$ and $Q'$ are connected by the generalized Knuth relation (3) or (4), then removing $\binom{b}{*}$ yields two different, non-equivalent biwords.
\end{rmk}

\subsection{Jeu de taquin on BPDs}
 Given $D\in\BPD(\pi)$ with $\ell(\pi)>0$, \cite[Definition 3.1]{gao2023canonical} produces another bumpless pipe dream $\nabla D\in\BPD(\pi')$ where $\ell(\pi')=\ell(\pi)-1$. We recall this definition in Section~\ref{sec:insert-jdt}. We call the $\nabla$ operator \textbf{jeu de taquin} on BPDs. 
The justification of this name is that, after applying a direct bijection between (skew) semistandard tableaux and BPDs for Grassmaninan permutations, the jeu de taquin algorithm on tableaux can be realized as a corresponding algorithm on BPDs. See \cite{huang2021schubert} for a detailed description. We will sometimes use the notation $\jdt(b,r)$ instead of $\nabla$ to emphasize that jeu de taquin starts from  position $(b,r)$. See Figure~\ref{fig:growth-ex3} for an illustration.

     For each BPD $D$, let $b$ be the smallest row with an empty square $\bl$, define $D' = \rect(D)$ to be the BPD obtained from $D$ by performing $\jdt$ on all empty squares on row $b$ from right to left. Suppose $\pi$ and $\mu$ are the permutations of $D'$ and $D$, respectively, then by \cite{gao2023canonical}, we have
    \[ \mu = s_{i_j}\ldots s_{i_1}\pi \]
    where $i_j>\ldots>i_1$. Thus, we define $I(D) = \{i_1,\ldots,i_j\}$.
    Also, when there is little ambiguity, we denote the BPD corresponding to a permutation $\pi$ on the growth diagram as $D_\pi$.

    \begin{thm}\label{thm:jdt}
        Let $D$ be the BPD corresponding to a biword 
        $w=\left( \begin{smallmatrix}
            b_1 & b_2 & \ldots & b_\ell \\
            k_1 & k_2 & \ldots & k_\ell \\
        \end{smallmatrix} \right)$
        and $b = \min\{b_1,\ldots,b_\ell\}$, and let $D'$ be the BPD corresponding to $w'$ obtained by removing all biletter $\binom{b}{k_i}$ from $w$. Then $D' = \rect(D)$.
    \end{thm}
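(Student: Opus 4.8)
The plan is to reduce the statement to a single-step claim and then induct, using the well-definedness furnished by Lemma~\ref{lem:remove_i}. Since $b=\min\{b_1,\dots,b_\ell\}$, no biletter of $w$ has top entry strictly below $b$, so deleting every biletter $\binom{b}{k_i}$ produces exactly the word $w_{>b}$ in the notation of Lemma~\ref{lem:remove_i}; that is, $w'=w_{>b}$. By Theorem~\ref{thm:main} together with the discussion following Lemma~\ref{lem:remove_i}, the BPD $\varphi(w_{>b})$ depends only on $D=\varphi(w)$ and not on the chosen representative $w\in\words(D)$. Hence $D'=\varphi(w')$ is a well-defined operator on $\BPD(\pi)$, and the theorem becomes the assertion that this operator coincides with the (also manifestly well-defined) operator $\rect$.

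The crux is a single-step correspondence: one jeu de taquin slide $\nabla=\jdt(b,r)$ starting from the rightmost empty square of row $b$ should have the same effect on the insertion BPD as deleting a single biletter $\binom{b}{k}$. To set this up I would fix a representative $w\in\words(D)$, run the right insertion $\mathcal{R}$ recalled in Section~\ref{sec:insert-jdt}, and analyze how the biletters with top entry $b$ govern the empty squares of row $b$. In particular I would verify that the smallest row of $D$ carrying an empty square is $\min_i b_i=b$, so that $\rect$ indeed begins its slides in row $b$, and that a single slide from the rightmost empty square of row $b$ reverses the insertion of one $b$-biletter, giving $\nabla D=\varphi(\widetilde w)$ where $\widetilde w$ is $w$ with that biletter removed. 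Here it is convenient that each biletter insertion raises the Coxeter length by one, matching the length drop of one per jdt slide.

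I would then close by induction on the number $m$ of biletters of $w$ with top entry $b$. Deleting a biletter keeps the word plactic, and so long as a $b$-biletter remains it keeps $b$ as both the minimum top entry and the lowest row with an empty square; thus after each slide the two processes stay synchronized. After $m$ steps, row $b$ has no remaining empty square, so $\rect$ halts, while on the word side $w$ has become $w_{>b}=w'$; this yields $\rect(D)=\varphi(w')=D'$ and simultaneously shows that the number of empty squares in row $b$ equals $m$.

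The hard part will be the single-step correspondence of the second paragraph. It requires a simultaneous, and fairly delicate, analysis of the right insertion algorithm and the Gao--Huang jeu de taquin operator, pinning down exactly which tile produced by a $b$-biletter insertion is the one consumed by a slide from row $b$, and checking that the right-to-left order in which $\rect$ processes the empty squares of row $b$ is compatible with a valid order of biletter deletions from the fixed representative. A point requiring care is that the intermediate words obtained by deleting only some of the $b$-biletters are not representative-independent---Lemma~\ref{lem:remove_i} guarantees independence only once all $b$-biletters have been removed---so the induction must be carried out along a fixed representative $w$ rather than purely at the level of the BPD.
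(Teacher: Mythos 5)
Your reduction is sound and, as far as it goes, consistent with how the paper organizes things: identifying $w'=w_{>b}$, invoking Lemma~\ref{lem:remove_i} and Theorem~\ref{thm:main} for representative-independence, checking that row $b$ is the lowest row containing empty tiles and that it contains exactly $m$ of them (one per $b$-biletter, since insertions at rows $>b$ never touch row $b$), and inducting on $m$. The problem is that everything you call ``the single-step correspondence'' is not a step of the proof --- it \emph{is} the proof, and you have only announced that you would carry out ``a fairly delicate analysis'' rather than carrying it out. The paper spends essentially all of Section 4 on exactly this point: Lemma~\ref{lem:jdt-insert} classifies, by a long case analysis on how a (reversed) jeu de taquin path and an insertion path can intersect, how a single slide changes the set of pipes an insertion path traverses, and Lemma~\ref{lem:jdt-commute} upgrades this to the commutation identity $\nabla\bigl(D\leftarrow\binom{b'}{k}\bigr)=\nabla(D)\leftarrow\binom{b'}{k}$ whenever $b'$ is at least the starting row of the slide. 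A proposal that defers this entire analysis has not proved the theorem.

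There is also a structural issue with the form of your single-step claim that you should confront before attempting the analysis. You phrase it as ``a single slide from the rightmost empty square of row $b$ reverses the insertion of one $b$-biletter.'' But the $b$-biletter whose insertion created that rightmost empty square is in general followed in $w$ by further biletters (with top entries $>b$) whose insertion paths may pass through, and rearrange, the very tiles your slide is supposed to consume; the slide therefore cannot literally ``reverse'' that insertion, and the intermediate word with one $b$-biletter deleted is only meaningful for the fixed representative, as you note. This is precisely why the paper proves a \emph{commutation} statement rather than an \emph{inversion} statement: a single $\nabla$ is pushed past one later insertion at a time (Lemma~\ref{lem:jdt-commute}), which localizes the interaction to one insertion path and one reversed jeu de taquin path and makes the case analysis finite. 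Even with that lemma in hand, one still has to dispose of the terminal configuration in which the slide meets the $b$-insertion itself (e.g.\ when only one $b$-biletter remains); in the paper this is absorbed into the path analysis (the cases where the reverse jdt path and the insertion path share points, and case (1e) where the slide's crossing is the tile the insertion uncrosses). Your outline gives no indication of how any of these interactions would be resolved, so as written there is a genuine gap.
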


    The following corollary is immediate from Theorem \ref{thm:jdt} by \cite{gao2023canonical}.

    \begin{cor}\label{cor:jdt-si}
        With the same notation as in Theorem \ref{thm:jdt}, let $\pi$ and $\mu$ be the permutations of $D'$ and $D$, respectively, then
        \[ \mu = s_{i_j}\ldots s_{i_1}\pi \]
        where $i_j>\ldots>i_1$.
    \end{cor}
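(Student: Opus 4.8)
The plan is to deduce the corollary by composing Theorem~\ref{thm:jdt} with the permutation-level description of $\rect$ that was recorded, following \cite{gao2023canonical}, in the paragraph defining $I(D)$; once these two ingredients are lined up, no new computation is needed. This is precisely why the statement is phrased as a corollary rather than a theorem.

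First I would invoke Theorem~\ref{thm:jdt} to identify $D'$ and $\rect(D)$ as the \emph{same} bumpless pipe dream, so that in particular the permutation $\pi$ of $D'$ equals the permutation of $\rect(D)$. For this to be coherent one must check that the two roles played by $b$ agree: in the definition of $\rect$, $b$ is the smallest row of $D$ carrying an empty square, whereas in Theorem~\ref{thm:jdt} it is $b=\min\{b_1,\dots,b_\ell\}$, the least first coordinate among the biletters of $w$. Under the biword-to-BPD correspondence the empty squares of the lowest occupied row are exactly the cells recorded by the biletters $\binom{b}{k_i}$, and deleting all such biletters is exactly what $\jdt$ performed on those empty squares (from right to left) achieves; this alignment is the content of Theorem~\ref{thm:jdt}, which I take as given. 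Hence $D'=\rect(D)$.

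Second I would quote the Gao--Huang analysis of jeu de taquin on BPDs: a single slide $\nabla$ applied to a BPD for a permutation $\sigma$ produces a BPD for $s_i\sigma$ for some $i$, lowering the Coxeter length by one, and when the slides are carried out on all empty squares of row $b$ from right to left the resulting simple transpositions have strictly decreasing indices. This is precisely the relation $\mu=s_{i_j}\cdots s_{i_1}\pi$ with $i_j>\cdots>i_1$ already recalled in the definition of $I(D)$. Combining this with $D'=\rect(D)$ from the first step, and using that $\pi$ is the permutation of both $D'$ and $\rect(D)$, yields the corollary verbatim. The only real obstacle is the bookkeeping in the first step, namely confirming that $\min\{b_i\}$ and the index of the lowest row bearing an empty square name the same row so that Theorem~\ref{thm:jdt} is applied to the correct row of $D$; once this is noted, the conclusion is an immediate restatement of the length-decreasing jdt rule with no further argument.
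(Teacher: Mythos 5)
Your proposal is correct and matches the paper exactly: the paper derives the corollary immediately by combining Theorem~\ref{thm:jdt} (giving $D'=\rect(D)$) with the fact from \cite{gao2023canonical}, recalled in the paragraph defining $I(D)$, that $\rect$ changes the permutation by $\mu=s_{i_j}\cdots s_{i_1}\pi$ with $i_j>\cdots>i_1$. Your extra bookkeeping about the two roles of $b$ is harmless but unnecessary, since that alignment is already the content of Theorem~\ref{thm:jdt}, which you rightly take as given.
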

\subsection{Growth diagrams}
\subsubsection{Defining growth diagrams}
Given a plactic biword
    $\left( \begin{smallmatrix}
        b_1 & b_2 & \ldots & b_\ell \\
        k_1 & k_2 & \ldots & k_\ell \\
    \end{smallmatrix} \right)$
    and let $a = \max\{b_i~|~ 1\leq i\leq \ell\}$. We define a growth diagram to be a matrix of permutations $\pi_{i,j}$ with $0 \leq i \leq a$ and $0 \leq j \leq \ell$. The \textbf{initial condition} is $\pi_{i,0} = \text{id}$ for all $i$ and $\pi_{a,j} = \text{id}$ for all $j$. The figure below shows a generic \textbf{square} of the growth diagram.

    \[
    \begin{tikzcd}[sep=tiny]
    	\pi_{i,j-1} && \pi_{i,j} \\
    	& {\ } \\
    	\pi_{i-1,j-1} && \pi_{i-1,j}
    	\arrow[no head, from=1-1, to=1-3]
    	\arrow[no head, from=1-1, to=3-1]
    	\arrow[no head, from=3-1, to=3-3]
    	\arrow[no head, from=1-3, to=3-3]
    \end{tikzcd}
    \]
    We fill the squares of the growth diagram as follows. For each biletter $\binom{b_i}{k_i}$, we put an $\times_{k_i}$ in the square whose corners are $\pi_{b_i,i-1},\pi_{b_i,i},\pi_{b_i-1,i-1},\pi_{b_i-1,i}$. In addition, in every other square between columns $i-1$ and $i$, we put a subscript $k_i$. The following figure shows an example where the biword is
    $\left( \begin{smallmatrix}
        1 & 3 & 1 & 2 & 1 \\
        3 & 3 & 2 & 2 & 1 \\
    \end{smallmatrix} \right).$
    \[\begin{tikzcd}[sep=tiny]
	{\pi_{3,0}} && {\pi_{3,1}} && {\pi_{3,2}} && {\pi_{3,3}} && {\pi_{3,4}} && {\pi_{3,5}} \\
	& _3 && \textcolor{red}{\times_3} && _2 && _2 && _1 \\
	{\pi_{2,0}} && {\pi_{2,1}} && {\pi_{2,2}} && {\pi_{2,3}} && {\pi_{2,4}} && {\pi_{2,5}} \\
	& _3 && _3 && _2 && \textcolor{red}{\textcolor{red}{\times_2}} && _1 \\
	{\pi_{1,0}} && {\pi_{1,1}} && {\pi_{1,2}} && {\pi_{1,3}} && {\pi_{1,4}} && {\pi_{1,5}} \\
	& \textcolor{red}{\times_3} && _3 && \textcolor{red}{\times_2} && _2 && \textcolor{red}{\times_1} \\
	{\pi_{0,0}} && {\pi_{0,1}} && {\pi_{0,2}} && {\pi_{0,3}} && {\pi_{0,4}} && {\pi_{0,5}}
	\arrow[no head, from=7-3, to=7-5]
	\arrow[no head, from=7-5, to=7-7]
	\arrow[no head, from=7-7, to=7-9]
	\arrow[no head, from=7-1, to=7-3]
	\arrow[no head, from=7-1, to=5-1]
	\arrow[no head, from=5-1, to=3-1]
	\arrow[no head, from=3-1, to=1-1]
	\arrow[no head, from=1-1, to=1-3]
	\arrow[no head, from=1-3, to=1-5]
	\arrow[no head, from=1-5, to=1-7]
	\arrow[no head, from=1-7, to=1-9]
	\arrow[no head, from=1-9, to=1-11]
	\arrow[no head, from=1-3, to=3-3]
	\arrow[no head, from=3-3, to=5-3]
	\arrow[no head, from=5-3, to=7-3]
	\arrow[no head, from=1-5, to=3-5]
	\arrow[no head, from=3-5, to=5-5]
	\arrow[no head, from=5-5, to=7-5]
	\arrow[no head, from=5-1, to=5-3]
	\arrow[no head, from=5-3, to=5-5]
	\arrow[no head, from=5-5, to=5-7]
	\arrow[no head, from=5-7, to=5-9]
	\arrow[no head, from=5-9, to=5-11]
	\arrow[no head, from=1-11, to=3-11]
	\arrow[no head, from=3-11, to=5-11]
	\arrow[no head, from=5-11, to=7-11]
	\arrow[no head, from=7-9, to=5-9]
	\arrow[no head, from=3-11, to=3-9]
	\arrow[no head, from=3-9, to=3-7]
	\arrow[no head, from=3-7, to=3-5]
	\arrow[no head, from=3-5, to=3-3]
	\arrow[no head, from=3-3, to=3-1]
	\arrow[no head, from=1-7, to=3-7]
	\arrow[no head, from=3-7, to=5-7]
	\arrow[no head, from=5-7, to=7-7]
	\arrow[no head, from=7-9, to=7-11]
	\arrow[no head, from=5-9, to=3-9]
	\arrow[no head, from=3-9, to=1-9]
    \end{tikzcd}\]

    For each point $(i,j)$ in the growth diagram, let $w(i,j)$ be the biword obtained from reading from left to right the X's to the NW of $(i,j)$. Formally speaking, $w(i,j)$ is obtained from 
    $\left( \begin{smallmatrix}
        b_1 & b_2 & \ldots & b_\ell \\
        k_1 & k_2 & \ldots & k_\ell \\
    \end{smallmatrix} \right)$
    by removing all biletter $\binom{b_s}{k_s}$ with $b_s \leq i$ or $s> j$. For example, in the above growth diagram, $w(1,4) = \left( \begin{smallmatrix}
        3 & 2 \\
        3 & 2 \\
    \end{smallmatrix} \right)$. Define $\pi_{i,j}$ to be the permutation of $\varphi(w(i,j))$, the bumpless pipe dream obtained by inserting $w(i,j)$.

\begin{rmk}
    When $k_1=\cdots =k_\ell=k$, we recover a version of classical growth diagrams for the RSK correspondence, where the input is a word with letters in positive numbers, the insertion object is a semistandard tableau, and the recording object is a standard tableau. For classical Knuth relations, deleting either all of the smallest letter in a word, or all of the largest letter in a word, preserves Knuth classes. 
\end{rmk}

\subsubsection{Local rules}
    \begin{thm}\label{thm:local-rule}
        Given a square with subscript $k$ as follows:

        \[
        \begin{tikzcd}[sep=tiny]
        	\pi && \sigma \\
        	& {\ } \\
        	\mu && \rho
        	\arrow[no head, from=1-1, to=1-3]
        	\arrow[no head, from=1-1, to=3-1]
        	\arrow[no head, from=3-1, to=3-3]
        	\arrow[no head, from=1-3, to=3-3]
        \end{tikzcd}
        \]
        Then one can get $\rho$ from $\pi,\mu$, and $\sigma$ by the following rules:

        \begin{enumerate}
            \item If there is no $\times$:
            \begin{enumerate}
                \item[(a)] If $\pi = \sigma$ then $\rho = \mu$.
                \item[(b)] If $\pi = \mu$ then $\rho = \sigma$.
                \item[(c)] If $\pi \neq \sigma, \mu$, then  $\mu = s_{i_j}\ldots s_{i_1}\pi$ where $I = \{i_j>\ldots>i_1\}$, and $\sigma = t_{\alpha\beta}\pi $ such that $\pi^{-1}(\alpha)\le k <\pi^{-1}(\beta)$ for some $\alpha<\beta$. 
                 Let $x := \min(I^C \cap [\alpha,\beta))$, and $A := (I^C\cap [\beta,\infty)) \cup \{x\} = \{j_1<j_2<\ldots\}$. Then $\rho = t_{j_\ell,j_{\ell+1}}\mu$ where $\ell$ is the smallest index such that $\mu^{-1}(j_\ell) \leq k < \mu^{-1}(j_{\ell+1})$.
            \end{enumerate}
            \item If there is an $\times$, then $\pi = \sigma$ and $\mu = s_{i_j}\ldots s_{i_1}\pi$ where $I = \{i_j>\ldots>i_1\}$. Let $I^C = \{j_1<j_2<\ldots\}$, then $\rho = t_{j_\ell,j_{\ell+1}}\mu$ where $\ell$ is the smallest index such that $\mu^{-1}(j_\ell) \leq k < \mu^{-1}(j_{\ell+1})$.
        \end{enumerate}
    \end{thm}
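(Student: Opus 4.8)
The plan is to identify each of the four corners with the insertion permutation of an explicit sub-biword and then to reconcile the two ways of producing $\rho$. By definition $\pi=\varphi(w(i,j-1))$, $\sigma=\varphi(w(i,j))$, $\mu=\varphi(w(i-1,j-1))$, and $\rho=\varphi(w(i-1,j))$. Now $w(i-1,j-1)$ is obtained from $w(i,j-1)$ by re-adjoining exactly the biletters with row value $i$, which (when present) is the smallest row value occurring; so $D_\pi=\rect(D_\mu)$, and Corollary~\ref{cor:jdt-si} gives the left vertical edge $\mu=s_{i_j}\cdots s_{i_1}\pi$ with $i_j>\cdots>i_1$, which is the set $I$ (the degenerate subcase with no row-$i$ biletter folds in as $I=\emptyset$). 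The same applies to the right vertical edge, giving $\rho=s_{i'_{j'}}\cdots s_{i'_1}\sigma$. The horizontal edges differ by the single column-$j$ biletter $\binom{b_j}{k}$: it survives the passage from $w(i,j-1)$ to $w(i,j)$ iff $b_j>i$, and from $w(i-1,j-1)$ to $w(i-1,j)$ iff $b_j\ge i$, and the square carries an $\times$ precisely when $b_j=i$. Since adjoining one biletter adds a single box, each such passage is a $k$-Bruhat cover.

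First I would dispose of the degenerate cases. If $b_j<i$ the biletter is deleted on both rows, so $\pi=\sigma$ and $\mu=\rho$, which is rule (1a). If the left rectification is trivial, i.e. $I=\emptyset$ so $\pi=\mu$, then there are no row-$i$ biletters among columns $\le j-1$; adjoining a biletter with $b_j>i$ preserves this, so the right edge is trivial as well and $\rho=\sigma$, which is rule (1b). In the remaining cases $\rho$ is produced by inserting $\binom{b_j}{k}$ into $\mu$ (with $b_j=i$ in the $\times$-case and $b_j>i$ in case (1c)), so $\rho=t_{cd}\mu$ is the $k$-Bruhat cover of $\mu$ determined by this insertion, and the entire content of the theorem is to name $c<d$.

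The core step is therefore to identify which cover the insertion selects and to express it through the data $\pi,\sigma,\mu$ already recorded. My approach is to read the cover off the one-line notation: inserting $\binom{b}{k}$ selects, among a distinguished set of accessible values, the unique consecutive pair $j_\ell<j_{\ell+1}$ whose positions straddle $k$, i.e. with $\mu^{-1}(j_\ell)\le k<\mu^{-1}(j_{\ell+1})$; the accessibility restriction is exactly what encodes the covering (no-intermediate-value) condition. In the $\times$-case the insertion occurs at the rectification row itself, and I would show directly that the accessible set is the full complement $I^C$, giving case (2). In case (1c) the biletter sits in a strictly higher row, so inserting it into $\pi$ already yields the known cover $\sigma=t_{\alpha\beta}\pi$, and the task is to transport this cover across the rectification $\mu=s_{i_j}\cdots s_{i_1}\pi$. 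Because the $s_{i_m}$ swap only adjacent values in $I$ and $I+1$, I would track how they reshape the accessible set near position $k$ and show it becomes $A=(I^C\cap[\beta,\infty))\cup\{x\}$ with $x=\min(I^C\cap[\alpha,\beta))$, the value $x$ being the single survivor of the interval $[\alpha,\beta)$ after rectification. The main obstacle is precisely this last computation: controlling the interaction of the cover $t_{\alpha\beta}$ with the descending chain of simple transpositions. I expect it to require a careful case analysis on the positions of $\alpha,\beta$ relative to $k$ and to the elements of $I$, together with the covering condition; the cleanest route is likely to verify $\rho=\rect(\sigma)$ and $\rho=t_{cd}\mu$ simultaneously on one-line notation, so that the two descriptions of $\rho$ pin down $c$ and $d$.
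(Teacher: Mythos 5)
Your setup coincides with the paper's: you identify the four corners with $\varphi$ of the sub-biwords $w(\cdot,\cdot)$, use Theorem~\ref{thm:jdt} and Corollary~\ref{cor:jdt-si} to read off the vertical edges as $\mu=s_{i_j}\cdots s_{i_1}\pi$, and dispose of rules (1a) and (1b) essentially as in Lemma~\ref{lem:rule-1ab}. The reduction of rule (2) to the situation of rule (1c) (the inserted biletter sitting at the rectification row itself) is also consistent with what the paper does.

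The gap is in the core step. You propose to determine the cover $\rho=t_{cd}\mu$ by a computation in one-line notation, transporting $t_{\alpha\beta}$ across the chain $s_{i_j}\cdots s_{i_1}$ via an ``accessible set'' whose description you would then verify equals $A$. But the cover selected by the insertion $D_\mu\leftarrow\binom{b}{k}$ is not a function of the permutation $\mu$ together with $b$ and $k$ in any way you have established: it is read off from the terminal move of the insertion path inside the specific bumpless pipe dream $D_\mu$, just as $t_{\alpha\beta}$ is read off from the insertion path inside $D_\pi$. There is no a priori permutation-level relation between these two covers; the relation must be extracted from how the insertion path deforms as $D_\pi$ is un-rectified back to $D_\mu$ one reverse jeu de taquin at a time. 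This is precisely the content of the paper's key Lemma~\ref{lem:jdt-insert}, which tracks the entire list of pipes $p_1<\cdots<p_\ell$ traversed by the insertion path under a single $\Delta_{c,i}$ and determines how that list changes, by a geometric case analysis on whether and where the reverse-jdt path and the insertion path intersect (together with Lemma~\ref{lem:consec-pipe} for the runaway case $i=p_{\ell-1}$, $i+1=p_\ell$). Only once that lemma is available does the bookkeeping you describe --- iterating over $i_c,\dots,i_1$ and watching $\alpha$ get pushed up to $x=\min(I^C\cap[\alpha,\beta))$ and $\beta$ up to $\min(I^C\cap[\beta,\infty))$ --- become a finite check, which is how the paper concludes. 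In short, your ``accessibility restriction encodes the covering condition'' is the statement that needs proof, and proving it requires a BPD-level path-interaction argument that your plan does not contain.
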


    \begin{ex}
    \label{ex:growth}
     Let the biword be
    $\left( \begin{smallmatrix}
        1 & 3 & 1 & 2 & 1 \\
        3 & 3 & 2 & 2 & 1 \\
    \end{smallmatrix} \right),$
    using the rules in Theorem \ref{thm:local-rule}, we have the following growth diagram.
    \[\begin{tikzcd}[sep=tiny]
	12345 && 12345 && 12345 && 12345 && 12345 && 12345 \\
	& _3 && \textcolor{red}{\times_3} && _2 && _2 && _1 \\
	12345 && 12345 && 12435 && 12435 && 12435 && 12435 \\
	& _3 && _3 && _2 && \textcolor{red}{\times_2} && _1 \\
	12345 && 12345 && 12435 && 12435 && 13425 && 13425 \\
	& \textcolor{red}{\times_3} && _3 && \textcolor{red}{\times_2} && _2 && \textcolor{red}{\times_1} \\
	12345 && 12435 && 12534 && 13524 && 15324 && 25314
	\arrow[no head, from=7-3, to=7-5]
	\arrow[no head, from=7-5, to=7-7]
	\arrow[no head, from=7-7, to=7-9]
	\arrow[no head, from=7-1, to=7-3]
	\arrow[no head, from=7-1, to=5-1]
	\arrow[no head, from=5-1, to=3-1]
	\arrow[no head, from=3-1, to=1-1]
	\arrow[no head, from=1-1, to=1-3]
	\arrow[no head, from=1-3, to=1-5]
	\arrow[no head, from=1-5, to=1-7]
	\arrow[no head, from=1-7, to=1-9]
	\arrow[no head, from=1-9, to=1-11]
	\arrow[no head, from=1-3, to=3-3]
	\arrow[no head, from=3-3, to=5-3]
	\arrow[no head, from=5-3, to=7-3]
	\arrow[no head, from=1-5, to=3-5]
	\arrow[no head, from=3-5, to=5-5]
	\arrow[no head, from=5-5, to=7-5]
	\arrow[no head, from=5-1, to=5-3]
	\arrow[no head, from=5-3, to=5-5]
	\arrow[no head, from=5-5, to=5-7]
	\arrow[no head, from=5-7, to=5-9]
	\arrow[no head, from=5-9, to=5-11]
	\arrow[no head, from=1-11, to=3-11]
	\arrow[no head, from=3-11, to=5-11]
	\arrow[no head, from=5-11, to=7-11]
	\arrow[no head, from=7-9, to=5-9]
	\arrow[no head, from=3-11, to=3-9]
	\arrow[no head, from=3-9, to=3-7]
	\arrow[no head, from=3-7, to=3-5]
	\arrow[no head, from=3-5, to=3-3]
	\arrow[no head, from=3-3, to=3-1]
	\arrow[no head, from=1-7, to=3-7]
	\arrow[no head, from=3-7, to=5-7]
	\arrow[no head, from=5-7, to=7-7]
	\arrow[no head, from=7-9, to=7-11]
	\arrow[no head, from=5-9, to=3-9]
	\arrow[no head, from=3-9, to=1-9]
    \end{tikzcd}\]
    Notice that in the square
    \[
    \begin{tikzcd}[sep=tiny]
        \pi = 12435 && \sigma = 13425 \\
        & _2 \\
        \mu = 13524 && \rho = 15324
        \arrow[no head, from=1-1, to=1-3]
        \arrow[no head, from=1-1, to=3-1]
        \arrow[no head, from=3-1, to=3-3]
        \arrow[no head, from=1-3, to=3-3]
    \end{tikzcd}
    \]
    we use rule (1c) of Theorem \ref{thm:local-rule}. In particular, we have $\pi \neq \sigma, \mu$ and $\mu = s_4s_2\pi$. Thus, $I = \{2,4\}$. Also, $\sigma = t_{23}\pi$, so $A = \{3,5,6,\ldots\}$. Since $\mu^{-1}(3) \leq k = 2 < \mu^{-1}(5)$, we have $\rho = t_{35}\mu = 15324$. On the other hand, in the square
    \[
    \begin{tikzcd}[sep=tiny]
        \pi = 13425 && \sigma = 13425 \\
        & \times_1 \\
        \mu = 15324 && \rho = 25314
        \arrow[no head, from=1-1, to=1-3]
        \arrow[no head, from=1-1, to=3-1]
        \arrow[no head, from=3-1, to=3-3]
        \arrow[no head, from=1-3, to=3-3]
    \end{tikzcd}
    \]
    we use rule (2) of Theorem \ref{thm:local-rule}. We have $\mu = s_4s_3\pi$, so $I = \{3,4\}$. Thus, $I^C = \{1,2,5,6,\ldots\}$. We have $\mu^{-1}(1) \leq k = 1 < \mu^{-1}(2)$, so $\rho = t_{12}\mu = 25314$.
    \end{ex}

       To check that the above growth diagram is correct, we can go through the insertion process. Figure \ref{fig:growth-ex} shows the insertion process of this biword. One can check that the permutations we obtain along the way are exactly the permutations on the bottom row of the growth diagram.

    \begin{figure}[h!]
        \centering
        \includegraphics[scale = 0.4]{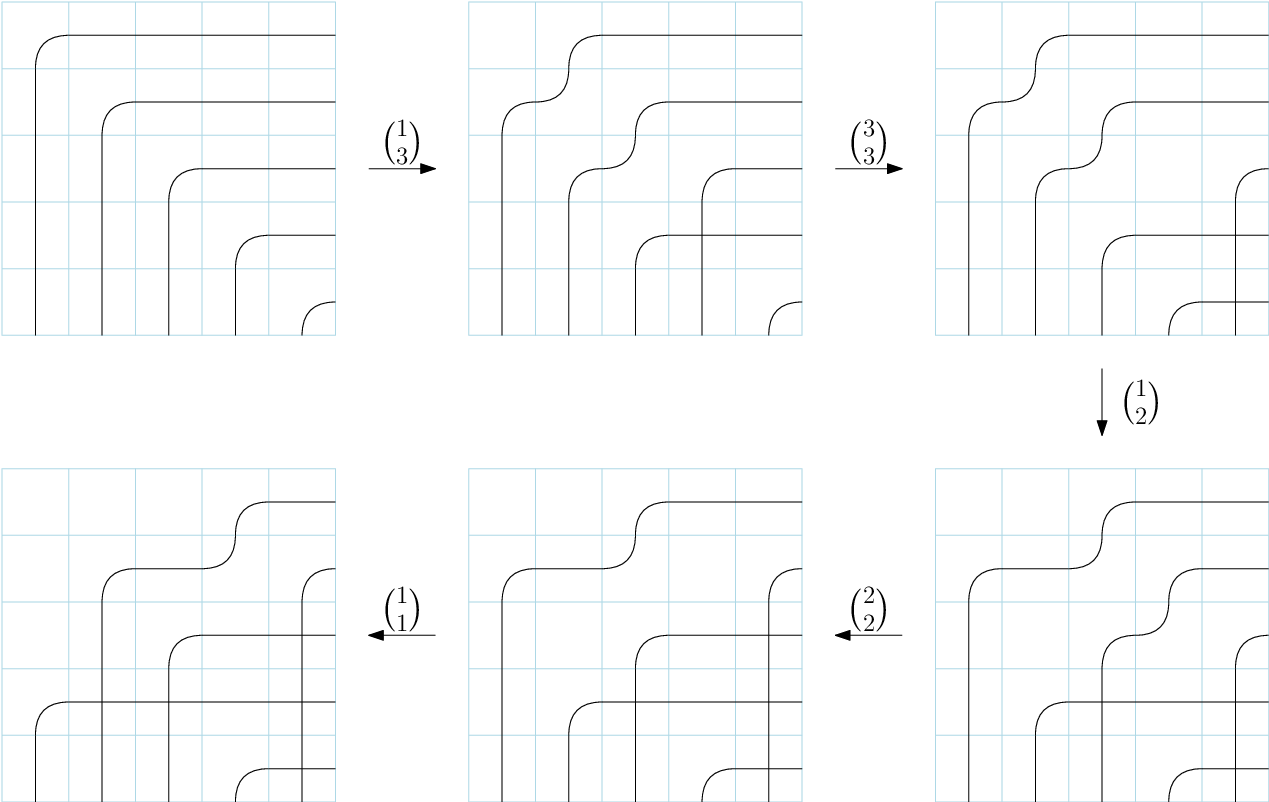}
        \caption{Insertion of $\left( \begin{smallmatrix}
        1 & 3 & 1 & 2 & 1 \\
        3 & 3 & 2 & 2 & 1 \\
    \end{smallmatrix} \right)$}
        \label{fig:growth-ex}
    \end{figure}

    On the other hand, removing all biletters $\binom{1}{k}$ in the original biword, we obtain the biword $\left( \begin{smallmatrix}
        3 & 2 \\
        3 & 2 \\
    \end{smallmatrix} \right)$. The BPD of this biword is shown in Figure \ref{fig:growth-ex2}.

    \begin{figure}[h!]
        \centering
        \includegraphics[scale = 0.4]{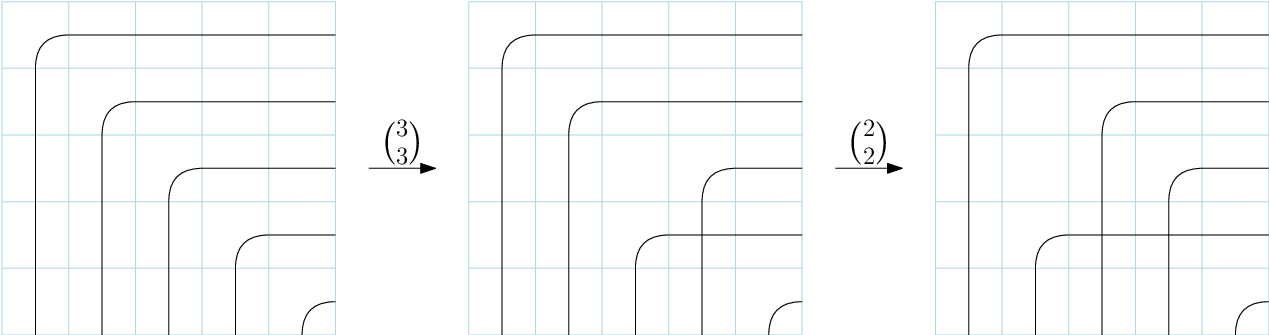}
        \caption{Insertion of $\left( \begin{smallmatrix}
        3 & 2 \\
        3 & 2 \\
    \end{smallmatrix} \right)$}
        \label{fig:growth-ex2}
    \end{figure}
    
    Let $D$ be the BPD corresponding to the original biword
    $\left( \begin{smallmatrix}
        1 & 3 & 1 & 2 & 1 \\
        3 & 3 & 2 & 2 & 1 \\
    \end{smallmatrix} \right)$
    (in Figure \ref{fig:growth-ex}), and $D'$ be the BPD corresponding to the new biword $\left( \begin{smallmatrix}
        3 & 2 \\
        3 & 2 \\
    \end{smallmatrix} \right)$ (in Figure \ref{fig:growth-ex2}). Theorem \ref{thm:jdt} says that $D' = \rect(D)$. This is indeed the case as shown in Figure \ref{fig:growth-ex3}.

    \begin{figure}[h!]
        \centering
        \includegraphics[scale = 0.4]{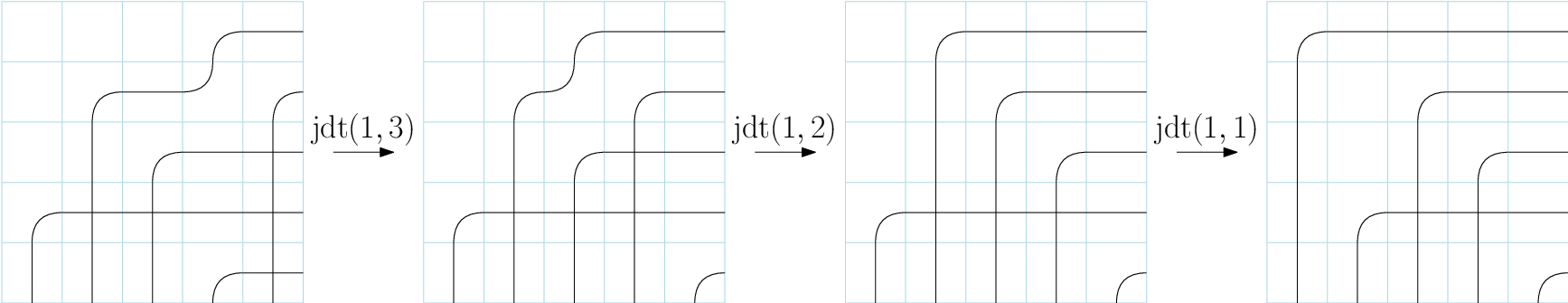}
        \caption{Rectification process}
        \label{fig:growth-ex3}
    \end{figure}

    \begin{defn}[\cite{BJS}]\label{def:compatible-sequence}
For a permutation $\pi$ with $\ell(\pi)=\ell$, a pair of integer sequences $\big(\mathbf{a}=(a_1,\ldots,a_{\ell}),\mathbf{r}=(r_1,\ldots,r_{\ell})\big)$ is a  \textbf{bounded reduced compatible sequence} of $\pi$ if
$s_{a_1}\cdots s_{a_\ell}$ is a reduced word of $\pi$,
 $r_1\leq\cdots \leq r_{\ell}$ is weakly increasing,
 $r_j\leq a_j$ for $j=1,\ldots,\ell$, and
 $r_j<r_{j+1}$ if $a_j<a_{j+1}$.

\end{defn}

The following theorem is an immediate consequence of Theorem~\ref{thm:jdt} and the definition of the PD-BPD bijection in \cite{gao2023canonical}.
\begin{thm}
    Let $Q:=\left( \begin{smallmatrix}
        b_1 & b_2 & \ldots & b_\ell \\
        k_1 & k_2 & \ldots & k_\ell \\
    \end{smallmatrix} \right)$
    and let $a = \max\{b_i~|~ 1\leq i\leq \ell\}$, and $(\pi_{i,j})_{0\le i\le a, 0\le j\le \ell}$ be the growth diagram of $Q$. Then the rightmost vertical chain
    \[\mathrm{id}=\pi_{a,\ell}\lessdot \cdots \lessdot \pi_{0,\ell}\] uniquely recovers a bounded reduced compatible sequence, and this bijects to $\varphi(Q)$ under the bijection in \cite{gao2023canonical}.

    Explicitly, by Corollary~\ref{cor:jdt-si}, for each $1\le i\le a$, we have $s_{i,m_i},\cdots s_{i,1}\pi_{i,\ell}=\pi_{i-1,\ell}$., where $s_{i,1}>\cdots >s_{i,m_i}$. Then the compatible sequence that corresponds to $Q$ is 
    \[\binom{\mathbf{a}}{\mathbf{r}}= \begin{pmatrix}
        s_{0,1} & \cdots & s_{0,m_1} & s_{1,1} & \cdots &s_{1,m_1} &\cdots & s_{a-1,1} & \cdots & s_{a-1,m_{a-1}} \\
        1 & \cdots & 1& 2 & \cdots &2 & \cdots & a & \cdots & a
    \end{pmatrix}. \]
\end{thm}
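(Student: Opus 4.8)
The plan is to establish that the rightmost vertical chain in the growth diagram decomposes the insertion BPD $\varphi(Q)$ into a sequence of rectification steps, each of which strips off one ``layer'' of letters, and that this decomposition is precisely the data recording a bounded reduced compatible sequence under the Gao--Huang bijection. First I would observe that the columns of the growth diagram already encode the relevant BPDs: by the definition of $\pi_{i,j}$ as the permutation of $\varphi(w(i,j))$, the rightmost column $\pi_{a,\ell},\ldots,\pi_{0,\ell}$ records the permutations of $\varphi(Q_{>a})=\mathrm{id}$ down through $\varphi(Q_{>0})=\varphi(Q)$. Thus $\pi_{i,\ell}$ is the permutation of $\varphi(Q_{>i})$, and the transition from $\pi_{i,\ell}$ to $\pi_{i-1,\ell}$ corresponds exactly to reinserting the biletters $\binom{i}{*}$, i.e., to reversing one rectification step.

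The second step is to invoke Theorem~\ref{thm:jdt} and Corollary~\ref{cor:jdt-si} to control each such transition. Reading $Q_{>i-1}$ has $b=i$ as its minimal top entry, so removing all $\binom{i}{*}$ biletters yields $Q_{>i}$, and Theorem~\ref{thm:jdt} gives $\varphi(Q_{>i})=\rect(\varphi(Q_{>i-1}))$. Corollary~\ref{cor:jdt-si} then states that $\pi_{i-1,\ell}=s_{i,m_i}\cdots s_{i,1}\,\pi_{i,\ell}$ with $s_{i,1}>\cdots>s_{i,m_i}$, which is exactly the descending chain in the $i$-th block of the claimed compatible sequence, and the row index $i$ supplies the compatibility label. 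Concatenating these descending chains with their labels produces the pair $\binom{\mathbf{a}}{\mathbf{r}}$ displayed in the statement.

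The third step is to verify that this pair genuinely is a \emph{bounded reduced compatible sequence} in the sense of Definition~\ref{def:compatible-sequence} and that it matches the Gao--Huang bijection applied to $\varphi(Q)$. For the reducedness and the $s_{a_1}\cdots s_{a_\ell}$ product, I would note that concatenating all the $\rect$-reversals builds $\varphi(Q)$'s permutation $\pi_{0,\ell}$ from the identity by $\ell=\ell(\pi_{0,\ell})$ simple reflections, so the word is automatically reduced. Weak monotonicity $r_1\le\cdots\le r_\ell$ is immediate from the block structure $1,\ldots,1,2,\ldots$, and the bound $r_j\le a_j$ together with the strict-increase condition $r_j<r_{j+1}$ when $a_j<a_{j+1}$ must be read off from the specific geometry of how $\rect$ acts on row $b=i$: the reflections $s_{i,1}>\cdots>s_{i,m_i}$ arising from jeu de taquin out of row $i$ satisfy $s_{i,t}\ge i$, giving the bound, and the descending-within-block, ascending-across-blocks pattern forces the compatibility condition. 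Finally, the identification with the Gao--Huang bijection is asserted to follow ``immediately from Theorem~\ref{thm:jdt} and the definition of the PD-BPD bijection,'' so I would unwind that definition and check that its recording of $\rect$ steps coincides block-by-block with our chain.

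The main obstacle I anticipate is the last step: precisely matching our layer-by-layer reflection data to the Gao--Huang bijection's own construction and confirming the two nontrivial compatibility inequalities ($r_j\le a_j$ and the strict jump across blocks). This requires knowing the exact form of the reflections that jeu de taquin on row $i$ produces—specifically that each $s_{i,t}$ has index at least $i$ and that the maximal index in block $i$ is strictly smaller than the minimal index in block $i+1$ fails in general, so the correct statement is the weaker compatibility of Definition~\ref{def:compatible-sequence}, which I would need to extract carefully from the structure of $\rect$ rather than assume. Everything else is bookkeeping over the columns of the growth diagram, but pinning down these inequalities from the geometry of $\nabla$ and $\rect$ is where the real content lies.
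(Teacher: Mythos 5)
Your proposal follows the same route as the paper, which simply declares the theorem an immediate consequence of Theorem~\ref{thm:jdt} (together with Corollary~\ref{cor:jdt-si}) and the definition of the PD--BPD bijection in the Gao--Huang paper: identify $\pi_{i,\ell}$ with the permutation of $\varphi(Q_{>i})$, recognize each column transition as a $\rect$ step, and read off the blocks of reflections. Your additional care about verifying the bounded reduced compatible sequence conditions (reducedness from the length count, and the inequalities $r_j\le a_j$ and the cross-block compatibility, which ultimately come from the structure of $\nabla$ and the Gao--Huang construction) is a reasonable fleshing-out of what the paper leaves implicit.
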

\begin{ex}
    Continuing Example~\ref{ex:growth}, the compatible sequence that corresponds to the chain
    \[12345\lessdot 12435 \lessdot 13425 \lessdot 25314\]
    is \[\binom{\mathbf{a}}{\mathbf{r}}= \begin{pmatrix}
        s_4 & s_3 & s_1 & s_2 & s_3 \\
        1 & 1 & 1 & 2 & 3 \\
    \end{pmatrix}. \]
\end{ex}

\section{Insertion and jeu de taquin on bumpless pipe dreams}\label{sec:insert-jdt}

    In this section we review the definition of right insertion and jeu de taquin on BPDs. We also define reverse jeu de taquin in this section, as well as the paths associated to these algorithms.
    \subsection{Right insertion}\label{subsec:def-insert}

    Recall the definition of right insertion in \cite{HP}.
    For the definition of basic operations $\mindroop$ and $\cbswap$ we refer the reader to \cite[Definition 3.1]{HP}.
    \begin{defn}[Right insertion]\label{def:insertion}
        Let $D\in \BPD(\pi)$ and $\binom{b}{k}$ be a biletter. We define the right insertion algorithm that produces $\left( D \leftarrow \binom{b}{k} \right)\in \BPD(\pi t_{\alpha,\beta})$ where $\alpha \le k < \beta$ and  $\ell(\pi)+1=\ell(\pi t_{\alpha,\beta})$ as follows. Let $(i, j)$ be the position of the $\rt$ on row $b$ with $j$ maximal.  
        \begin{enumerate}[(1)]
            \item  Perform a $\mindroop$ at $(i,j)$ and let $(i_1, j_1)=\droop(i,j)$. 
            \item If $(i_1,j_1)$ is a $\jt$, let $(i_1,j_2)$ be the $\rt$ with $j_2<j_1$ maximal. Update $(i,j)$ to be $(i_1,j_2)$ and go back to the beginning of step (1).
            \item If $(i_1,j_1)$ is a $\bt$, we check whether the two pipes passing through this tile have already crossed.
            \begin{enumerate}
                \item If yes, perform a $\cbswap$ and update $(i,j)$ to be $\swap(i_1,j_1)$, and go to step (1).
                \item Otherwise, we let $p$ denote the pipe of the $\rt$-turn of this tile and check if $p$ exits from row $r \le k$. If so, we update $(i,j)$ to be $(i_1,j_1)$ and go to step (1). Otherwise, we replace the tile at $(i_1,j_1)$ with a $\+$ and terminate the algorithm.
            \end{enumerate}
        \end{enumerate}
    \end{defn}

        When the input is a plactic biword, the algorithm simplifies. Recall the following Lemma about right insertion.

    \begin{lemma}(\cite[Lemma 5.2]{HP})\label{lem:droopwid}
       During right insertion of $D\leftarrow \binom{b}{k}$, $D\in\BPD(\pi)$ where $k$ is at most the first descent of $\pi$,  every $\mindroop$ is performed on a pipe $p$ such that $p$ does not contain any horizontal segment in a $\+$. In particular, this means that every $\mindroop$ is bounded by a width 2 rectangle. 
    \end{lemma}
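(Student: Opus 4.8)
The plan is to separate the two assertions: the width-$2$ bound is a formal consequence of the statement about crossings, so the real content is that the drooping pipe has no horizontal segment inside a $\+$. First I would record the reduction. By the definition of $\mindroop$ (\cite[Definition 3.1]{HP}), a droop of the $\rt$ at $(i,j)$ is carried out inside the minimal blank rectangle hanging from that corner, and the width of this rectangle exceeds $2$ only when the horizontal arm of the drooping pipe emanating from $(i,j)$ is forced to pass through a $\+$ before it can turn down. Hence, once we know that the drooping pipe $p$ contains no horizontal segment inside any $\+$ whatsoever, it follows immediately that the rectangle has width $2$. So everything reduces to the first assertion.

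For the first assertion I would isolate the structural input coming from the hypothesis $k\le d$, where $d$ is the first descent of $\pi$. Index the pipes of a BPD by the row on which they exit the right edge, and recall that in a reduced BPD two pipes $P_a,P_b$ with $a<b$ cross if and only if $(a,b)$ is an inversion of $\pi$. Since $\pi(1)<\cdots<\pi(d)$, no pair $a<b\le d$ is an inversion; therefore a pipe $P_p$ with $p\le d$ is never the larger-index strand of an inversion, so at every crossing it is the upper strand, and all its crossing partners are pipes $P_q$ with $q>d$. In the Rothe BPD the upper strand of a crossing is always the vertical one, so $P_p$ has no horizontal-through-cross when $p\le d$. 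The plan is to promote this from the Rothe BPD to every configuration encountered during the insertion by checking that this orientation at a crossing is preserved by $\mindroop$ and $\cbswap$.

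With the structural fact in hand I would run an induction on the steps of the right insertion algorithm (Definition~\ref{def:insertion}), maintaining the invariant that the pipe whose $\rt$ we are about to droop exits the right edge at a row $\le k\ (\le d)$. The base case is the first droop, performed at the rightmost $\rt$ of row $b\le k$: here I would identify that the drooping pipe exits at a row $\le k$ and invoke the structural fact. For the inductive step, passing to the next $\rt$ in case (2) and continuing in case (3b) both keep the active pipe among those exiting at rows $\le k$ — indeed case (3b) continues precisely when the $\rt$-turn pipe exits from a row $r\le k$, which is exactly the invariant. Combining the invariant with the structural fact then shows that every $\mindroop$ acts on a pipe with no horizontal segment in a $\+$, which completes the proof via the reduction of the first paragraph.

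The step I expect to be the main obstacle is the $\cbswap$ case (3a). There the preceding $\mindroop$ has produced a $\bt$ between two pipes that have already crossed, i.e.\ a momentarily non-reduced configuration, and $\cbswap$ exchanges the roles of the two strands; I must verify both that the pipe the algorithm follows after the swap still exits at a row $\le k$ and that the "upper strand is vertical'' orientation survives the swap. This requires pinning down exactly which two pipes meet at the $\bt$ and using $k\le d$ to rule out the swap transferring control to a pipe indexed above $d$, which would be free to sit horizontally in a $\+$. Establishing that the row/orientation invariant is stable under $\cbswap$ — and, in tandem, that $\mindroop$ never flips the upper strand of a crossing from vertical to horizontal even through the intermediate non-reduced states — is the technical heart of the argument.
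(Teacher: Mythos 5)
This lemma is imported from \cite[Lemma 5.2]{HP} and is not reproved in the present paper, so there is no in-paper argument to compare you against; I can only judge the outline on its own terms. Its skeleton is sensible --- reduce the width-$2$ claim to the no-horizontal-segment-in-a-$\+$ claim, prove a structural fact about which strand of a crossing is vertical, and maintain an invariant about which pipe is currently being drooped --- but there are two genuine gaps. First, you establish the structural fact only for the Rothe BPD and propose to ``promote'' it along the insertion moves. That promotion cannot reach the actual starting configuration, which is an \emph{arbitrary} $D\in\BPD(\pi)$, not something obtained from the Rothe BPD by insertion moves. The fix is to prove the fact for every reduced BPD directly: pipes are monotone (traced from the south edge they move only north and east), so at a $\+$ the vertical strand must exit the east edge strictly above the horizontal strand, since otherwise two monotone strands, one starting northwest of the other and finishing south of it, would be forced to cross a second time, contradicting reducedness. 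With that, no promotion is needed at all: $\mindroop$ and $\cbswap$ preserve the permutation, so every configuration at which a $\mindroop$ is about to be performed is again a reduced BPD of $\pi$, and the fact applies verbatim; combined with $k\le d$ (no inversions among the first $d$ positions), a pipe exiting at a row $\le k$ is never the horizontal strand of a $\+$.

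Second, and more seriously, the whole lemma now rests on your invariant that every $\mindroop$ acts on a pipe exiting at a row $\le k$, and you only verify it where the algorithm hands it to you, namely step (3b) of Definition~\ref{def:insertion}, which explicitly tests that condition. The initial droop is fine by monotonicity (the pipe turning at the rightmost $\rt$ of row $b$ exits at a row $\le b\le k$), but the transitions in step (2) (landing on a $\jt$ and jumping left to a different $\rt$, hence a \emph{different} pipe, in a row strictly below the starting row) and in step (3a) (the $\cbswap$) hand control to a new pipe for which nothing in your outline bounds the exit row; monotonicity alone only gives a bound by the current row, which grows as the insertion descends. You explicitly defer the $\cbswap$ case as ``the technical heart,'' but flagging the hard step is not the same as doing it: until the invariant is checked at these transitions, the proposal is an outline with the essential content of the lemma still missing.
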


    Therefore, we can define $\maxdroop$ as a maximal sequence of consecutive $\mindroop$s at tiles in a same column. By Lemma \ref{lem:droopwid}, every $\maxdroop$ is also bounded by a width 2 rectangle. Furthermore, we can consider insertion as a sequence of $\maxdroop$ and $\cbswap$, followed by a terminal move that replaces a $\bt$ with a $\+$. Thus, we can define the insertion path as follows.

    \begin{defn}[Insertion path]\label{def:insert-path}
        Let $D\in \BPD(\pi)$ and $\binom{b}{k}$ be a biletter. We define the insertion path of $\left(D \leftarrow \binom{b}{k}\right)$ as a sequence of squares $(i_0,j_0),(i_1,j_1),\ldots,(i_\ell,j_\ell)$ such that
        \begin{enumerate}
            \item $(i_0,j_0)$ is the starting point of $\left(D \leftarrow \binom{b}{k} \right)$, i.e. $(i_0,j_0)$ is the position of the $\rt$ on row $b$ with $j$ maximal.
            \item If $(i_k,j_k)$ is a $\jt$, let $(i_k,j_k')$ be the $\rt$ with $j_k'<j_k$ maximal, then $(i_{k+1},j_{k+1}) := (i_k,j_k')$.
            \item Otherwise, if the algorithm performs a $\maxdroop$ at $(i_k,j_k)$ then $(i_{k+1},j_{k+1}) := \maxd(i_k,j_k)$. If it performs a $\cbswap$ at $(i_k,j_k)$ then $(i_{k+1},j_{k+1}) := \swap(i_k,j_k)$.
            \item The insertion algorithm performs the terminal move at $(i_\ell,j_\ell)$.
        \end{enumerate}

        In addition, we say \textbf{the insertion path goes through pipe $p$} if it performs a $\maxdroop$ at pipe $p$, or its terminal move involves pipe $p$.
    \end{defn}

    \begin{figure}[h!]
        \centering
        \includegraphics[scale = 0.4]{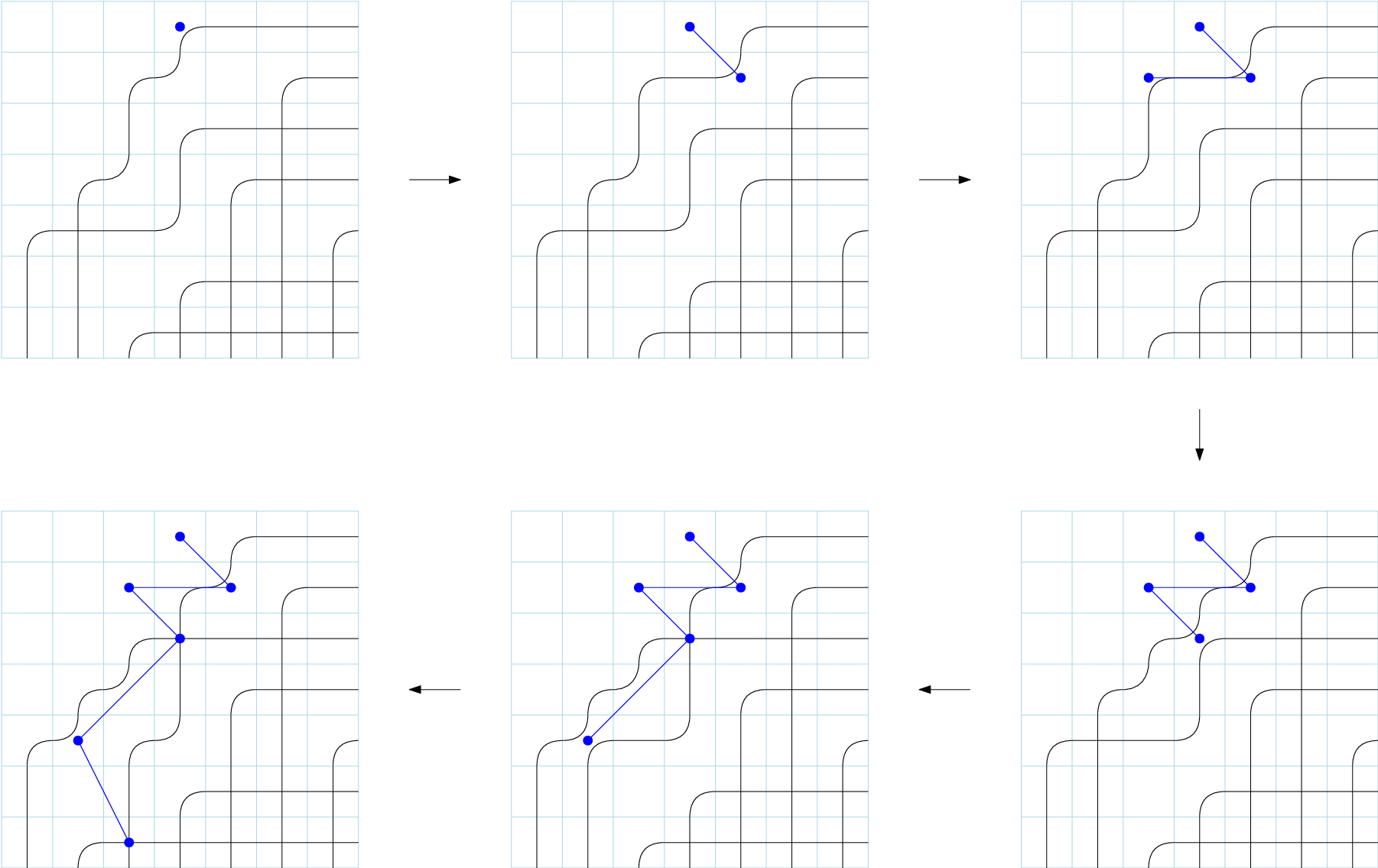}
        \caption{Insertion path}
        \label{fig:ins-path-ex}
    \end{figure}

    For example, Figure \ref{fig:ins-path-ex} shows the insertion path of $\left(D \leftarrow \binom{1}{3}\right)$ where $D$ is the top left BPD. The  dots form the sequence of squares in the insertion path. This insertion path goes through pipes $2$ and $3$.

    \subsection{Jeu de taquin}

    Recall the definition of jeu de taquin in \cite{gao2023canonical}.

    \begin{defn}[Jeu de taquin]\label{def:jdt}
        Given $D\in\BPD(\pi)$ with $\ell(\pi)>0$, the following process produces another bumpless pipe dream $\nabla D\in\BPD(\pi')$ where $\ell(\pi')=\ell(\pi)-1$. Let $r$ be the smallest row index such that the row $r$ of $D$ contains \bl-tiles. To initialize, mark the rightmost \bl-tile in row $r$ with a label ``$\times$".
        \begin{enumerate}
            \item If the marked \bl-tile is not the rightmost \bl-tile in a contiguous block of \bl-tiles in its row, move the label ``$\times$" to the rightmost \bl-tile of this block. Assume the marked tile has coordinate $(x,y)$, and the pipe going through $(x,y+1)$ is $p$. 
            \item If $p\neq y+1$, suppose the \jt-tile of $p$ in column $y+1$ has coordinate $(x',y+1)$ for some $x'>x$. Let $U$ be the rectangle with NW corner $(x,y)$ and SE corner $(x',y+1)$. We modify the tiles in $U$ as follows:
        
            \begin{enumerate}
            
                \item For each pipe $q$ intersecting $p$ at some $(z,y+1)$ where $x<z<x'$ and $(z,y)$ is an \rt-tile, let $(z',y)$ be the \jt-tile of $q$ in column $y$. Ignoring the presence of $p$, droop $q$ at $(z,y)$ within $U$, so that $(z,y+1)$ becomes an \rt-tile and $(z',y+1)$ becomes a \jt-tile;
                \item Undroop pipe $p$ at $(x',y+1)$ into $(x,y)$, and move the mark to $(x',y+1)$.
            \end{enumerate}
            We call this process $\recundroop(x,y)$, and define $\recund(x,y) := (x',y+1)$. Now go back to step (1) and repeat.
            \item If $p=y+1$, the pipes $y$ and $y+1$ must intersect at some $(x',y+1)$ for some $x'>x$. Replace  this \+-tile with a \bt-tile, undroop the \jt-turn of this tile into $(x,y)$ and adjust the pipes between row $x$ and $x'$  in a same fashion as described in  Step (2) above. We call this process $\uncross(x,y)$. We are done after this step.
        
        \end{enumerate}
        Let $a$ be the column $y$ in Step (3), then the final BPD is of the permutation $s_a\pi$. Note that since pipes $y$ and $y+1$ in Step (3) intersect, we do have $\ell(s_a\pi) = \ell(\pi)-1$. Finally, let $(b,r)$ be the starting point of jeu de taquin, that is, the first marked square before Step (1). We will sometimes use the notation $\jdt(b,r)$ instead of $\nabla$ to emphasize that jeu de taquin starts from $(b,r)$.

        In addition, if this process start at row $r$ and ends by swapping pipes $a$ and $a+1$, we define $\pop(D) := (a,r)$.
    \end{defn}

    Similar to insertion, we can define the jeu de taquin path as follows.

    \begin{defn}[Jeu de taquin path]\label{def:jdt-path}
        Let $D\in \BPD(\pi)$, we define the jeu de taquin path of $D$ as a sequence of squares $(i_0,j_0),(i_1,j_1),\ldots,(i_\ell,j_\ell)$ such that
        \begin{enumerate}
            \item $(i_0,j_0)$ is the starting point of jeu de taquin.
            \item Suppose the mark is at $(i_k,j_k)$, if $(i_k,j_k+1)$ is a $\bl$, then $(i_{k+1},j_{k+1}) := (i_k,j_k+1)$.
            \item Otherwise, $(i_k,j_k+1)$ is not a $\bl$. If the pipe at $(i_k,j_k+1)$ is not $j_k+1$, then jeu de taquin performs a $\recundroop(i_k,j_k)$, so $(i_{k+1},j_{k+1}) := \recund(i_k,j_k)$.
            \item Jeu de taquin performs an $\uncross$ at $(i_\ell,j_\ell)$, terminating the process.
        \end{enumerate}
    \end{defn}

    For example, Figure \ref{fig:jdt-path-ex} shows the jeu de taquin path of $\nabla(D)$ where $D$ is the top left BPD. The red dots form the sequence of squares in the jeu de taquin path. The process starts from row 1 and terminates by swapping pipes 5 and 6, so we have $\pop(D) = (5,1)$.

    \begin{figure}[h!]
        \centering
        \includegraphics[scale = 0.4]{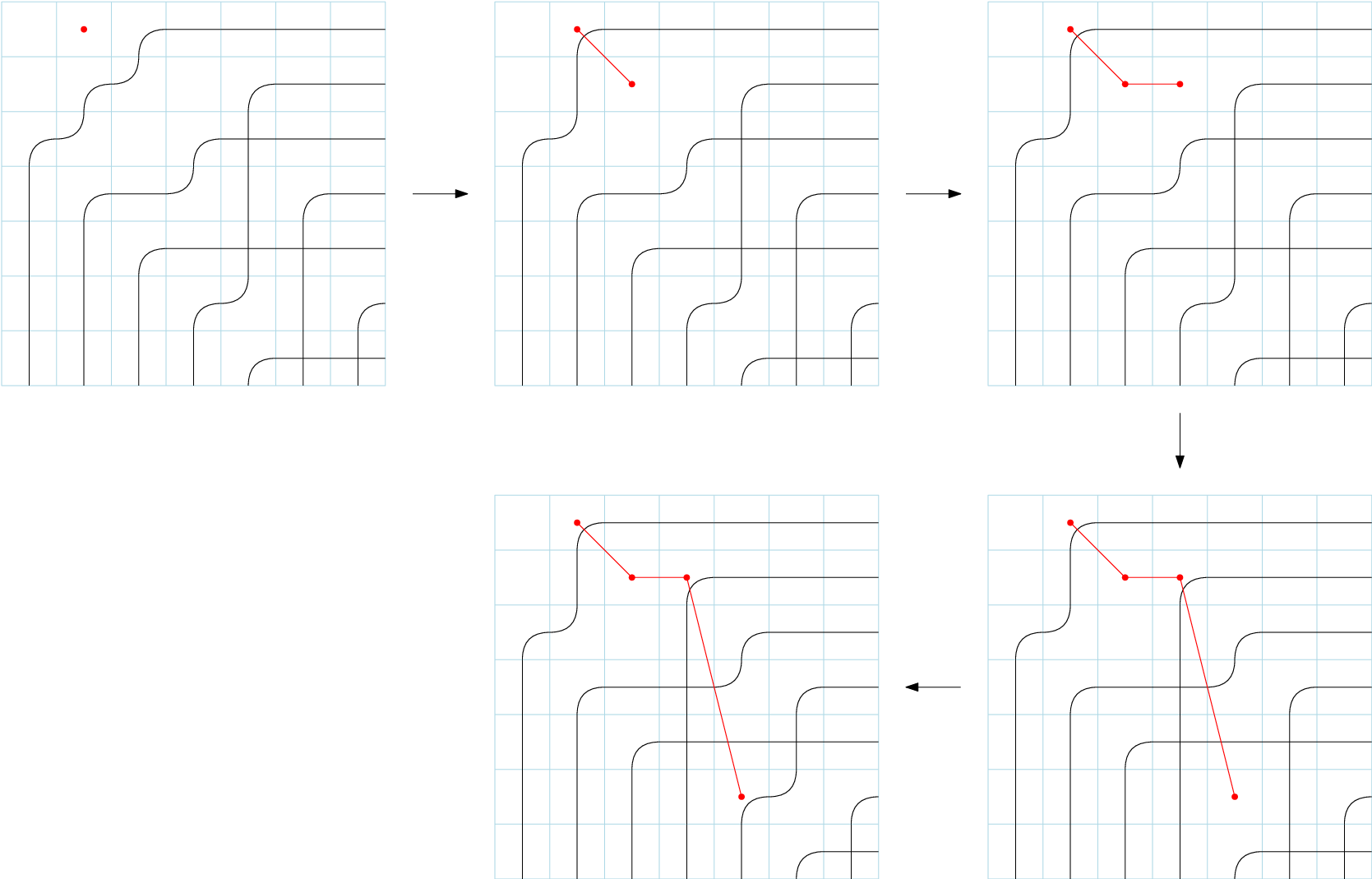}
        \caption{Jeu de taquin path}
        \label{fig:jdt-path-ex}
    \end{figure}

    We can also define a reversed jeu de taquin move as the inverse of jeu de taquin as follows.

    \begin{defn}[Reversed jeu de taquin]\label{def:rev-jdt}
        Given $D\in\BPD(\pi)$, and $s_a$ is not a left-descent of $\pi$ the following process produces another bumpless pipe dream $\Delta_{r,a} D\in\BPD(s_a\pi')$ where $\ell(s_a\pi)=\ell(\pi)+1$ when the execution is successful. Let $(x,y)$ be the only $\rt$ of pipe $a$ in column $a$, and $(x',y+1)$ be the only $\rt$ of pipe $a+1$ in column $a+1$. Droop pipe $a$ at $(x,y)$ into $(x',y+1)$ in a same fashion as in Step (2) of Definition \ref{def:jdt}. Replace the $\bt$ at $(x',y+1)$ with a $\+$ and mark $(x,y)$ with a label ``$\times$". We call this process $\cross(x,y)$.
        \begin{enumerate}
            \item Assume the marked tile has coordinate $(x,y)$, if $x\leq r$, terminate the process.
            \item Otherwise, if the marked \bl-tile is not the leftmost \bl-tile in a contiguous block of \bl-tiles in its row, move the label ``$\times$" to the leftmost \bl-tile of this block. If the marked tile ends up in column 1, the algorithm fails and does not produce an output.
            \item Assume the marked tile has coordinate $(x,y)$, and the pipe going through $(x,y-1)$ is $p$. Suppose the $\rt$-tile of $p$ in column $y-1$ has coordinate $(x',y-1)$, then we droop pipe $p$ at $(x',y-1)$ into $(x,y)$ in a same fashion as in Step (2) of Definition \ref{def:jdt}. We call this process $\recdroop(x,y)$, and define $\recd(x,y) := (x',y-1)$. Now go back to Step (1) and repeat.  
        \end{enumerate}
    \end{defn}

    We can also define the reversed jeu de taquin path as follows for successful executions of reverse jeu de taquin.

    \begin{defn}[Reversed jeu de taquin path]\label{def:rev-jdt-path}
        We define the reversed jeu de taquin path as a sequence of squares $(i_0,j_0),(i_1,j_1),\ldots,(i_\ell,j_\ell)$ such that
        \begin{enumerate}
            \item $(i_0,j_0)$ is the starting point of reversed jeu de taquin.
            \item If the process does not terminate, suppose the mark is at $(i_k,j_k)$. If $(i_k,j_k-1)$ is a $\bl$, then $(i_{k+1},j_{k+1}) := (i_k,j_k-1)$.
            \item Otherwise, $(i_k,j_k-1)$ is not a $\bl$, so reversed jeu de taquin performs a $\recdroop(i_k,j_k)$, then $(i_{k+1},j_{k+1}) := \recd(i_k,j_k)$.
            \item Reversed jeu de taquin terminates at $(i_\ell,j_\ell)$.
        \end{enumerate}
    \end{defn}

    For example, Figure \ref{fig:rjdt-path-ex} shows the reversed jeu de taquin path of $\Delta_{1,5}(D)$ where $D$ is the top left BPD. The red dots form the sequence of squares in the reversed jeu de taquin path. One can check that this is exactly the reversed process of Figure \ref{fig:jdt-path-ex}.

    \begin{figure}[h!]
        \centering
        \includegraphics[scale = 0.4]{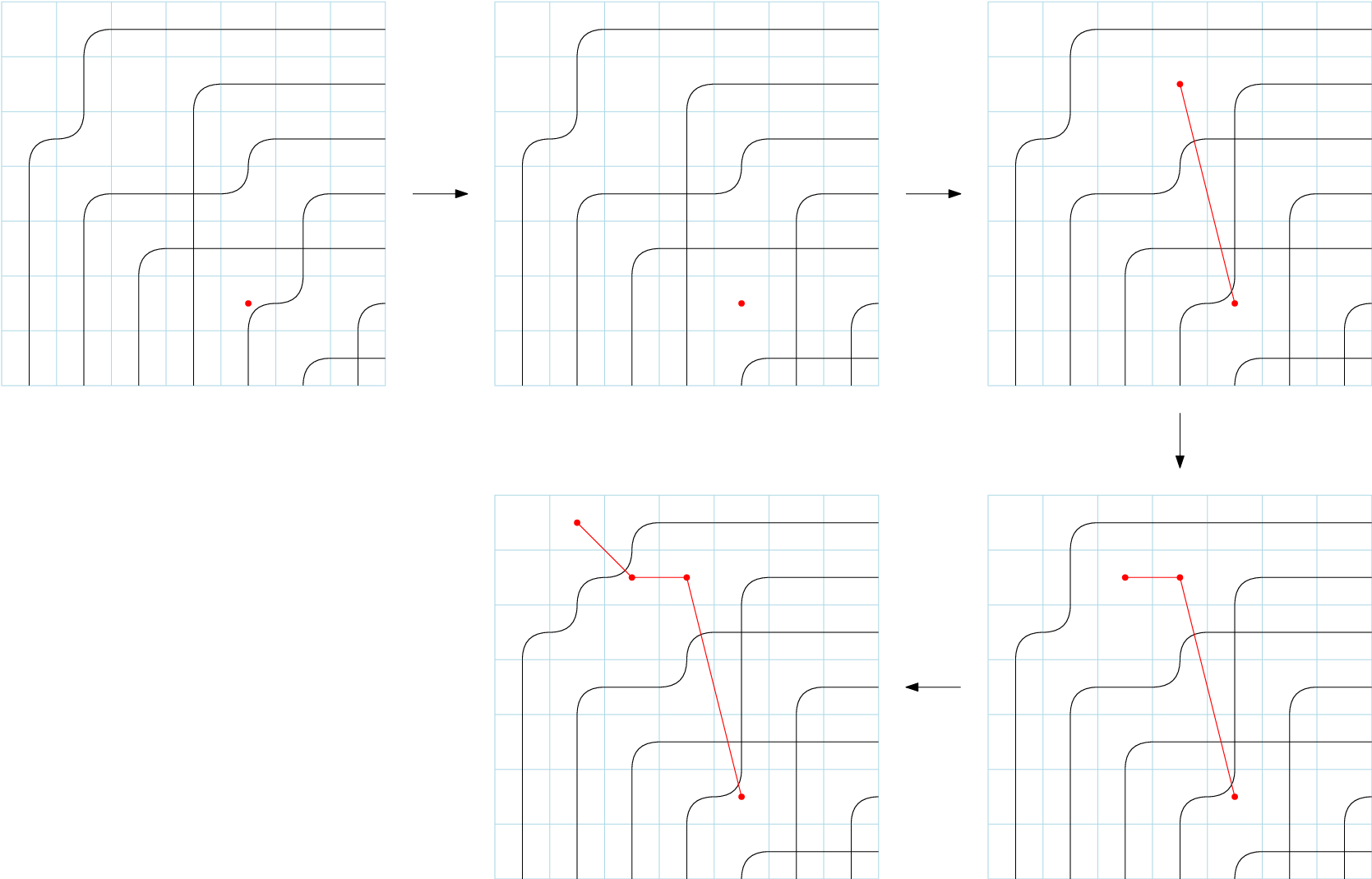}
        \caption{Reversed jeu de taquin path}
        \label{fig:rjdt-path-ex}
    \end{figure}

\section{Proofs of growth rules}

    \begin{lemma}\label{lem:rule-1ab}
        With the same notation as in Theorem \ref{thm:local-rule}, rules (1a) and (1b) of Theorem \ref{thm:local-rule} are true. In these cases, if $D_\pi = \rect(D_\mu)$ then $D_\sigma = \rect(D_\rho)$.
    \end{lemma}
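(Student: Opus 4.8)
The plan is to convert the hypotheses of rules (1a) and (1b)---which are equalities of \emph{permutations}---into equalities of the underlying biwords $w(i,j)$, and then read off both the prescribed value of $\rho$ and the rectification invariant directly. Throughout, recall that $\pi=\pi_{i,j-1}$, $\sigma=\pi_{i,j}$, $\mu=\pi_{i-1,j-1}$, $\rho=\pi_{i-1,j}$ are the permutations of $\varphi(w(i,j-1))$, $\varphi(w(i,j))$, $\varphi(w(i-1,j-1))$, $\varphi(w(i-1,j))$, that the subscript of the square is $k=k_j$, and that ``no $\times$'' means precisely $b_j\neq i$.

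The first step is to record the engine behind the whole argument: by Definition~\ref{def:insertion}, each right insertion increases Coxeter length by exactly one, so $\ell(\varphi(w))$ equals the number of biletters of $w$ for every plactic biword $w$. Now $w(i,j-1)\subseteq w(i,j)$ (they differ only in the $j$-th biletter, retained on the right iff $b_j>i$), and $w(i,j-1)\subseteq w(i-1,j-1)$ (they differ only in the biletters with $b_s=i$ and $s<j$). In each comparison, equality of the two permutations forces equality of the number of biletters, and for nested biwords this forces equality of the biwords themselves. This is the device that turns each hypothesis into an identity of biwords, hence of BPDs.

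For rule (1a) I would argue that $\pi=\sigma$ forces $b_j\le i$; together with $b_j\neq i$ this gives $b_j<i$. Then the $j$-th biletter is discarded from both $w(i,j)$ and $w(i-1,j)$, so $w(i,j-1)=w(i,j)$ and $w(i-1,j-1)=w(i-1,j)$ as biwords, whence $D_\pi=D_\sigma$ and $D_\mu=D_\rho$. In particular $\rho=\mu$, which is the conclusion of (1a), and the rectification invariant is immediate: if $D_\pi=\rect(D_\mu)$, then $D_\sigma=D_\pi=\rect(D_\mu)=\rect(D_\rho)$. For rule (1b) I would argue that $\pi=\mu$ forces the absence of any biletter with $b_s=i$ and $s<j$; adjoining $b_j\neq i$ rules out such a biletter at $s=j$ as well, so no row-$i$ biletter occurs in columns $\le j$. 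Consequently $w(i,j-1)=w(i-1,j-1)$ and $w(i,j)=w(i-1,j)$, whence $D_\pi=D_\mu$ and $D_\sigma=D_\rho$, so $\rho=\sigma$ as required.

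The delicate point, and the step I expect to be the main obstacle, is the rectification invariant in case (1b). Since $D_\pi=D_\mu$ there, the hypothesis $D_\pi=\rect(D_\mu)$ would force $\rect(D_\mu)=D_\mu$. By Theorem~\ref{thm:jdt}, $\rect$ deletes exactly the biletters of minimal row value, and hence strictly decreases length whenever it is defined---and it is undefined precisely on the empty-biword (identity) BPD. Thus $\rect(D_\mu)=D_\mu$ can never hold, the hypothesis is vacuous, and the conclusion $D_\sigma=\rect(D_\rho)$ follows trivially. The remaining work is bookkeeping with the retained sets of biletters; the only genuine care needed is in making the length identity $\ell(\varphi(w))=|w|$ airtight so that equal permutations force equal biwords, and in correctly diagnosing the rule-(1b) invariant as vacuous rather than attempting to verify it directly.
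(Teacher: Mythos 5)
Your argument for the first statement (that rules (1a) and (1b) hold) and for the rectification invariant in case (1a) is correct and is essentially the intended fleshing-out of the paper's one-line proof. The length identity $\ell(\varphi(w))=|w|$, which you extract from the fact that each right insertion raises Coxeter length by exactly one (Definition~\ref{def:insertion}), combined with the nestings $w(i,j-1)\subseteq w(i,j)$ and $w(i,j-1)\subseteq w(i-1,j-1)$, is exactly the right device: equal permutations force equal biwords, hence equal BPDs, and both conclusions $\rho=\mu$ (resp.\ $\rho=\sigma$) and the case-(1a) chain $D_\sigma=D_\pi=\rect(D_\mu)=\rect(D_\rho)$ drop out.

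Where you genuinely depart from the paper is the rectification invariant in case (1b), and your instinct that this is the delicate point is right: since (1b) gives $D_\pi=D_\mu$ and $D_\sigma=D_\rho$, a direct verification would require $D_\rho=\rect(D_\rho)$, which fails for every non-identity $\rho$, so the only way to save the implication is to show the hypothesis $D_\mu=\rect(D_\mu)$ never occurs; the paper's proof does not comment on this. Two remarks on your vacuousness argument. First, you do not need Theorem~\ref{thm:jdt} (an awkward forward reference) to see that $\rect$ strictly decreases length: by definition $\rect$ performs one $\nabla$ per blank tile in the chosen row, that row contains at least one blank tile, and each $\nabla$ lowers $\ell$ by one. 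Second, and more importantly, the step from ``$\rect$ is undefined on the identity BPD'' to ``the hypothesis can never hold'' silently adopts the convention that the equality $D_\pi=\rect(D_\mu)$ is false when the right-hand side is undefined. Under the opposite convention $\rect(\mathrm{id})=\mathrm{id}$ (which is what the left-column initial condition of the growth diagram would suggest), the statement has an actual counterexample: for the biword $\binom{2}{2}$, the square with corners $\pi_{1,0}=\pi_{0,0}=\mathrm{id}$ and $\pi_{1,1}=\pi_{0,1}=s_2$ is a (1b) square in which the hypothesis holds but $D_\sigma\neq\rect(D_\rho)$. So your proof is correct exactly under the convention you chose; state it explicitly, since that one-sentence leap is the only soft spot in an otherwise more careful treatment than the paper's.
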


    \begin{proof}
        The fact that (1a) and (1b) are true follow directly from the definition of growth diagram. The second statement is also immediate.
    \end{proof}

    Before proving Theorems \ref{thm:jdt} and \ref{thm:local-rule} for the remaining two rules, we will prove a few  lemmas.

    \begin{lemma}\label{lem:consec-pipe}
        Suppose during insertion $\left( D \leftarrow \binom{b}{k} \right)$, the insertion path is on the $\rt$-tile in column $p$ of pipe $p$, then from this point, the insertion path will go through pipes $p+1,p+2,\ldots$ until it stops.
    \end{lemma}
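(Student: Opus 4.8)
The plan is to prove the statement by induction on the pipes visited, maintaining the geometric invariant that \emph{each time the insertion path is about to perform a $\maxdroop$, its active $\rt$-tile lies in column $q$ and belongs to pipe $q$.} The hypothesis of the lemma is exactly the base case of this invariant with $q=p$, and by Definition~\ref{def:insert-path} the path ``goes through'' a pipe precisely when it performs a $\maxdroop$ at that pipe or when its terminal move involves it. Hence it suffices to show that the invariant propagates from $q$ to $q+1$: each successive $\maxdroop$ is performed one pipe and one column further to the right, so that the pipes visited are the consecutive integers $p, p+1, p+2, \ldots$ until the algorithm stops. The essential tool is Lemma~\ref{lem:droopwid}: because $k$ is at most the first descent of $\pi$, every $\maxdroop$ is bounded by a width-$2$ rectangle, so a $\maxdroop$ whose active $\rt$-tile sits in column $q$ must land in column $q+1$.

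For the inductive step, suppose the path is at the $\rt$-tile of pipe $q$ in column $q$ and performs a $\maxdroop$. By the width-$2$ bound it ends at a tile in column $q+1$. The crucial point is that, under the invariant, this tile is a $\bt$ shared with pipe $q+1$: the staircase alignment ``column index $=$ pipe label'' means that column $q+1$ is occupied by pipe $q+1$, so the drooping pipe $q$ meets pipe $q+1$ rather than landing on a $\jt$ that would trigger a leftward jump in step (2) of Definition~\ref{def:insertion}. I then split into the cases of step (3): if the two pipes at the $\bt$ have already crossed, the $\cbswap$ keeps the path on pipe $q+1$ and leaves its $\rt$-tile in column $q+1$, re-establishing the invariant with $q$ replaced by $q+1$; if they have not crossed, then either the $\rt$-turn pipe exits from a row $\le k$, in which case the path continues at the $\rt$-tile of pipe $q+1$ in column $q+1$, or the terminal move fires, replacing the $\bt$ with a $\+$ and involving pipe $q+1$, and the algorithm stops. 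In all non-terminating cases the invariant holds at $q+1$, and in the terminating case the last pipe gone through is $q+1$; induction then yields that the path goes through $p, p+1, p+2, \ldots$ in order.

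The step I expect to be the main obstacle is verifying the ``crucial point'': that the column$=$label invariant genuinely forces the $\maxdroop$ of pipe $q$ to meet \emph{exactly} pipe $q+1$ at a $\bt$, and that the $\cbswap$ cannot shift the active $\rt$-tile out of column $q+1$ or onto a pipe with a different label. This is where Lemma~\ref{lem:droopwid} must be used in full: the width-$2$ bound is what prevents the droop from skipping a pipe or landing two columns over, and the accompanying fact that the relevant pipes carry no horizontal segment inside a $\+$ is what keeps the staircase column-by-column alignment intact under the preceding droops. Once the invariant is shown to be preserved, the consecutiveness of the visited pipes $p, p+1, p+2, \ldots$ follows immediately.
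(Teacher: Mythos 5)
Your overall skeleton matches the paper's: induct on the invariant that the path sits at the $\rt$-tile of pipe $q$ in column $q$, and use Lemma~\ref{lem:droopwid} to force each $\maxdroop$ to land in column $q+1$. But the step you yourself flag as ``the main obstacle'' is a genuine gap, and the justification you offer for it is circular. You assert that ``the staircase alignment `column index $=$ pipe label' means that column $q+1$ is occupied by pipe $q+1$, so the drooping pipe $q$ meets pipe $q+1$ rather than landing on a $\jt$.'' That column $q+1$ holds the $\rt$-turn of pipe $q+1$ at the landing square is precisely what has to be proved; nothing in Lemma~\ref{lem:droopwid} gives it to you. The paper closes this by exploiting the \emph{maximality} of $\maxdroop$ twice: (i) if $\maxd(x,q)=(x',q+1)$ were a $\jt$, step (2) of Definition~\ref{def:insertion} would send the path left to an $\rt$ in column $q$ and droop there again, producing two $\maxdroop$s in the same column, contradicting maximality; hence the landing tile is a $\bt$. (ii) If the $\rt$-turn at that $\bt$ belonged to a pipe $q'<q$ (a priori it is some pipe $\le q+1$, and it cannot be $q$ itself), then $q$ and $q'$ cross in column $q$, so the forced $\cbswap$ would return the path to column $q$ and trigger yet another droop there --- again contradicting maximality. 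Only then does $q'=q+1$ follow. Without some version of these two contradiction arguments your induction does not get off the ground.

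A secondary problem: your claim that in the already-crossed case ``the $\cbswap$ keeps the path on pipe $q+1$ and leaves its $\rt$-tile in column $q+1$'' is unsupported and, as stated, wrong in general --- $\swap(i_1,j_1)$ sends the path to the location of the \emph{pre-existing} crossing of the two pipes, which need not lie in column $q+1$. In the paper's argument the $\cbswap$ scenario at a bump with a smaller pipe is exactly what gets ruled out by the maximality contradiction, and for the surviving case $q'=q+1$ the path simply continues (or terminates) at the $\rt$-tile of pipe $q+1$ in column $q+1$, re-establishing the invariant. You should either show that pipes $q$ and $q+1$ cannot have already crossed in this configuration or fold that case into the contradiction argument; as written, this branch of your case analysis is not justified.
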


    \begin{proof}
        Let $(x,p)$ be the coordinate of the $\rt$-tile in column $p$ of pipe $p$, and let $(x',p+1) := \maxd(x,p)$. After $\maxdroop(x,p)$, $(x',p+1)$ cannot be a $\jt$. This is because if $(x',p+1)$ is a $\jt$, the insertion path will go left to $(x',p)$ and perform another $\maxdroop$ on $(x',p)$. This means that the insertion path performs 2 consecutive $\maxdroop$ in the same column, which contradicts the definition of $\maxdroop$. Thus, after $\maxdroop(x,p)$, $(x',p+1)$ is a $\bt$. Let $q$ be the pipe of the $\rt$ in $(x',p+1)$, clearly, $q\leq p+1$. If $q<p$, then $p$ and $q$ must cross at some square $(y,p)$, so the insertion path will perform a $\cbswap(x',p+1)$ followed by a $\maxdroop(y,p)$. Once again, this contradicts the definition of $\maxdroop$. Thus, $q = p+1$, and the insertion path moves from pipe $p$ to $p+1$. Note that now $(x',p+1)$ is the $\rt$-tile in column $p+1$ of pipe $p+1$, so we can repeat the same argument. Hence, the insertion path will go through pipes $p+1,p+2,\ldots$ until it stops.
    \end{proof}

    The next lemma is the key lemma to prove rules (1c) and (2).

    \begin{lemma}\label{lem:jdt-insert}
        Let $D\in\BPD(\pi)$, and $D' = \nabla(D)$. Suppose $\pop(D) = (i,c)$, then by definition $D'\in\BPD(\sigma)$ where $\sigma = s_i\pi$. Given $b\geq c$ and $k$ such that the smallest descent in $\pi$ is at least $k$. Suppose the insertion path of $D' \leftarrow \binom{b}{k}$ goes through pipes $p_1<p_2<\ldots<p_\ell$. Let $P := \{p_1,p_2,\ldots,p_\ell\}$, then
        \begin{enumerate}
            \item if $i = p_{j}$ and $i+1 \neq p_{j+1}$ for some $1\leq j\leq \ell-1$, then $D \leftarrow \binom{b}{k}$ goes through pipes $p_1,\ldots,p_{j-1},p_j+1,p_{j+1},\ldots,p_\ell$;
            %
            \item if $i = p_{\ell-1}$ and $i+1 = p_\ell$, then $D \leftarrow \binom{b}{k}$ goes through pipes $p_1,\ldots,p_{\ell-2},p_\ell,p_\ell + 1,p_\ell+2,\ldots$ until it terminates;
            \item if $i = p_\ell$ then $D \leftarrow \binom{b}{k}$ goes through pipes $p_1,\ldots,p_{\ell-1},p_\ell + 1$;
            \item otherwise, $D \leftarrow \binom{b}{k}$ goes through pipes $p_1,\ldots,p_\ell$.
        \end{enumerate}
        In particular, unless $i = p_{\ell-1}$ or $i = p_{\ell}$, the last two pipes of $D \leftarrow \binom{b}{k}$ are still $p_{\ell-1}$ and $p_\ell$.
    \end{lemma}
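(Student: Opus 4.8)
The plan is to exploit the fact that $D$ is obtained from $D'$ by a single reversed jeu de taquin move. Since $\pop(D)=(i,c)$, Definition~\ref{def:rev-jdt} gives $D=\Delta_{c,i}(D')$: starting from $D'$ we perform a $\cross$ crossing pipes $i$ and $i+1$, and then a sequence of $\recdroop$s that carries the marked blank tile up and to the left until it reaches row $c$. Consequently $D$ and $D'$ agree tile-for-tile away from the reversed jeu de taquin path $\gamma$, which is a connected staircase of tiles running from the new $\+$ of pipes $i,i+1$ up to row $c$, and which involves only the pipes $i$ and $i+1$ together with the pipes drooped across them. Since the smallest descent of $\pi$ is at least $k$, Lemma~\ref{lem:droopwid} lets me treat both insertions as sequences of $\maxdroop$s and $\cbswap$s followed by a terminal move, and the strategy is to run $D'\leftarrow\binom{b}{k}$ and $D\leftarrow\binom{b}{k}$ in parallel and compare their paths.

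The first step is the observation that each move of an insertion path ($\maxdroop$, $\cbswap$, or terminal move) is determined by the tiles in a bounded window around the current square. Hence the two insertion paths perform identical moves for as long as they remain in the region where $D$ and $D'$ coincide, and they can diverge only when the path first reaches $\gamma$. Because $\gamma$ is supported on pipes $i$ and $i+1$, I would use the hypothesis $b\ge c$ to ensure the insertion starts no higher than the top of $\gamma$, so that the path can feel the modification only through pipe $i$: if $i\notin P$, then the two paths traverse exactly the same pipes $p_1,\dots,p_\ell$, which is case~(4). The point requiring care here is that pipe $i+1$ may lie on the path even when pipe $i$ does not, and one must check that the traversal of pipe $i+1$ is unaffected by the presence or absence of its crossing with pipe $i$ unless the path also droops on pipe $i$.

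The heart of the argument is the local analysis at the step where $i=p_j$, i.e.\ where the path in $D'$ droops on, or terminates on, pipe $i$. In $D$ the pipe $i$ has been drooped across pipe $i+1$ by the $\cross$ move, so the corresponding $\maxdroop$ in $D$ lands on pipe $i+1$ rather than on pipe $i$; tracking the relevant tiles shows that the path in $D$ uses $p_j+1=i+1$ in place of $p_j=i$ and then rejoins the undisturbed region, yielding $p_1,\dots,p_{j-1},p_j+1,p_{j+1},\dots,p_\ell$ when $i+1\ne p_{j+1}$ (case~(1)) and the analogous shift of the terminal move when $i=p_\ell$ (case~(3)). The exceptional behaviour occurs precisely when $i+1$ is the next pipe of the path: in case~(2), where $i=p_{\ell-1}$ and $i+1=p_\ell$, the redirection onto pipe $i+1$ places the path on the \rt-tile in column $i+1$ of pipe $i+1$, so Lemma~\ref{lem:consec-pipe} forces it to continue through the consecutive pipes $p_\ell+1,p_\ell+2,\dots$ until it terminates. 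I expect this case, together with the verification that the paths genuinely reconverge after the single redirection, to be the main obstacle, since it requires following the drooped tiles through the crossing and invoking Lemma~\ref{lem:consec-pipe} at exactly the right moment.

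Finally, the ``in particular'' statement is an immediate consequence of the case analysis: the two excluded situations are exactly $i=p_{\ell-1}$ (the $j=\ell-1$ instance of case~(1), together with case~(2)) and $i=p_\ell$ (case~(3)), and in every remaining situation, namely case~(1) with $j\le\ell-2$ and case~(4), the last two pipes $p_{\ell-1}$ and $p_\ell$ are left untouched.
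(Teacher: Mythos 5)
Your overall strategy --- realizing $D$ as $\Delta_{c,i}(D')$, running the two insertions in parallel, and localizing any divergence to the region modified by the reverse jeu de taquin --- is the same as the paper's. The local analysis you sketch for the step where the insertion path droops on pipe $i$ (redirection onto $i+1$, with Lemma~\ref{lem:consec-pipe} invoked when $i+1=p_\ell$) matches the paper's treatment of the case where the two paths are disjoint, and your reading of the ``in particular'' clause is fine.

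However, there is a genuine gap: you assert that the insertion path ``can feel the modification only through pipe $i$,'' and hence that $i\notin P$ immediately gives case (4). This is false as stated. The region where $D$ and $D'$ differ is not supported on pipes $i$ and $i+1$ alone: each $\recdroop$ step of $\Delta_{c,i}$ droops \emph{other} pipes $\alpha_1,\dots,\alpha_{r-1}$ inside its rectangle, so the insertion path of $D'\leftarrow\binom{b}{k}$ can meet the reversed jeu de taquin path along a run of $\maxdroop$s on those pipes even when neither $i$ nor $i+1$ is touched there. In that situation the insertion path in $D$ is \emph{not} square-for-square the path in $D'$: one must show that it instead tracks the displaced $\rt$-tiles of $\alpha_1,\dots,\alpha_{r-1}$ one column to the right, and then analyze the entry and exit squares of the shared segment (including the subcase where that segment terminates at the $\cross$ location itself) to see that the path reconverges and hence traverses the same pipes. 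This is the bulk of the paper's proof --- its ``Case 2,'' with several sub-configurations at the first and last shared squares --- and it is entirely absent from your proposal. A second, smaller omission: for your four cases to be exhaustive you must rule out configurations such as $i=p_j-1$ with $j<\ell$, or $i=p_j$ with $i+1=p_{j+1}$ and $j<\ell-1$; the paper eliminates these using the hypothesis that $\pi$ has no descent before position $k$, and without such impossibility arguments your claim that crossing pipes $i$ and $i+1$ leaves the traversal of pipe $i+1$ unaffected is unjustified.
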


    Before proving Lemma \ref{lem:jdt-insert}, let us give some examples of the cases. In Figure \ref{fig:jdt-insert-ex12}, we have a BPD $D$ with $\pop(D) = (3,1)$. In $D' = \nabla(D)$, the insertion path of $D' \leftarrow \binom{2}{5}$ goes through pipes $2,3,5,6,7$. Since $i = 3$ is one of the pipes, but $i+1 = 4$ is not, the insertion path of $D \leftarrow \binom{2}{5}$ goes through pipes $2,4,5,6,7$. This is case (1) of Lemma \ref{lem:jdt-insert}. On the other hand, the insertion path of $D' \leftarrow \binom{1}{5}$ goes through pipes $1,2,3,4$. Since $i$ and $i+1$ are the last two pipes, the insertion path of $D \leftarrow \binom{1}{5}$ goes through pipes $1,2,4,5,6,7$. This is case (2) in Lemma \ref{lem:jdt-insert}. Finally, the insertion path of $D' \leftarrow \binom{2}{2}$ goes through pipes $2$ and $3$. Thus, the insertion path of $D \leftarrow \binom{2}{2}$ goes through pipes $2$ and $4$. This is case (3) in Lemma \ref{lem:jdt-insert}.

    \begin{figure}[h!]
        \centering
        \includegraphics[scale = 0.4]{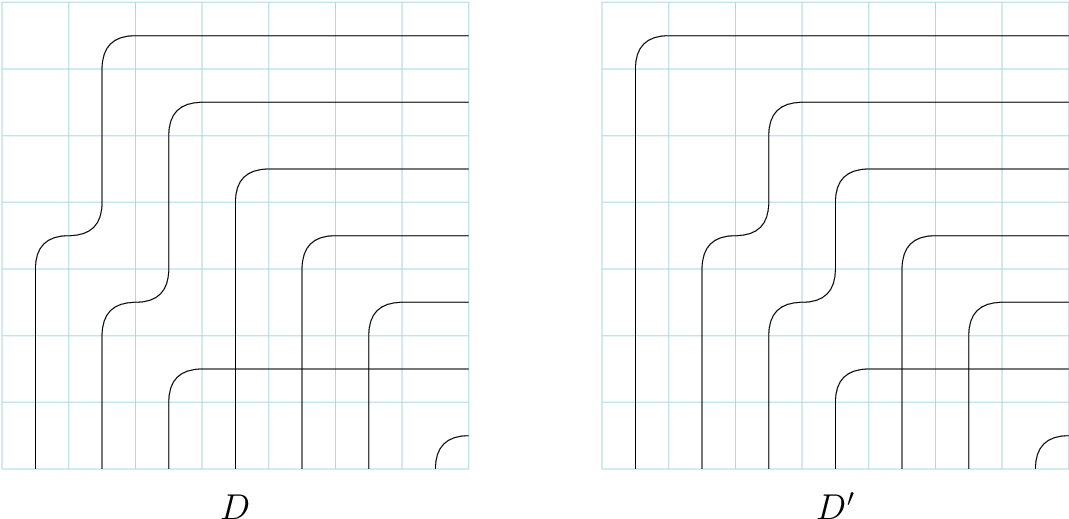}
        \caption{$D$ (left) and $D' = \nabla(D)$ (right)}
        \label{fig:jdt-insert-ex12}
    \end{figure}
    
    \begin{proof}[Proof of Lemma \ref{lem:jdt-insert}]
        Since $\pop(D) = (i,c)$, $c$ is the smallest row with a $\bl$ in $D$, and $D = \Delta_{c,i}(D')$. Let us study the interaction between the reversed $\jdt$ path $\Delta_{c,i}(D')$ (denoted $\mathcal{J}$) and the insertion path $D' \leftarrow \binom{b}{k}$ (denoted $\mathcal{I}$). 

        \textbf{Case 1:} If $\mathcal{J}$ and $\mathcal{I}$ do not share any common points, then the only way that $\mathcal{J}$ can affect $\mathcal{I}$ is at the beginning where $\mathcal{J}$ performs a $\cross(x,i)$. This crosses pipes $i$ and $i+1$, and we have a few cases.

        \begin{enumerate}
            \item[(1a)] $i\neq p_j, p_j-1$ for all $1\leq j\leq \ell$. This means that $i,i+1\notin P$, so crossing pipes $i$ and $i+1$ does not affect $\mathcal{I}$.

            \item[(1b)] $i = p_j - 1$ for some $1\leq j < \ell$. This means that $\sigma^{-1}(p_j-1) < \sigma^{-1}(p_j)$. Since $j<\ell$, we must have $\sigma^{-1}(p_j)\leq k$. Thus, $\sigma^{-1}(p_j-1) < \sigma^{-1}(p_j)\leq k$. However, we have $\pi = s_i\sigma$; this means that $\pi$ has a descent before position $k$. This is not possible.

            \item[(1c)] $i = p_j$ and $i+1\neq p_{j+1}$ for some $1\leq j \leq \ell-1$. Note that since $\mathcal{J}$ and $\mathcal{I}$ do not share any common point, in $D'$, $(x,i)$ is not on the same column as the point of pipe $i$ on $\mathcal{I}$. Thus, crossing pipe $i$ and $i+1$ does not affect $\mathcal{I}$, and the new insertion path of $D\leftarrow \binom{b}{k}$ simply goes through pipes $\ldots,p_{j-1}, p_j +1, p_{j+1},\ldots$. This is case (1) in Lemma \ref{lem:jdt-insert}.

            \item[(1d)] $i = p_j$, and $i+1 = p_{j+1}$ for some $1\leq j < \ell-1$. By the same argument as in case (1b), we have $\sigma^{-1}(p_j) < \sigma^{-1}(p_{j+1})\leq k$, so in $\pi$, there is a descent before position $k$. This is also not possible.

            \item[(1e)] $i = p_{\ell - 1}$ and $i+1 = p_\ell$. Again, since $\mathcal{J}$ and $\mathcal{I}$ do not share any common point, in $D'$, $(x,i)$ is not on the same column as the point of pipe $i$ on $\mathcal{I}$. After $\cross(x,i)$ crosses pipes $p_{\ell - 1}$ and $p_\ell$, the new insertion path does not go through pipe $p_{\ell-1}$ anymore. Instead, it will perform a $\maxdroop$ followed by a $\cbswap$ and stay in pipe $p_\ell$ (see Figure \ref{fig:jdt-insert-1d}). Furthermore, since the intersection of pipes $p_{\ell-1}$ and $p_\ell$ is now in column $p_\ell$, the new insertion path is now on a $\rt$ of pipe $p_\ell$ in column $p_\ell$. By Lemma \ref{lem:consec-pipe}, the new insertion path will go through pipes $p_\ell+1,p_\ell+2,\ldots$ until it terminates. This is case (2) in Lemma \ref{lem:jdt-insert}.

            \begin{figure}[h!]
                \centering
                \includegraphics[scale = 0.4]{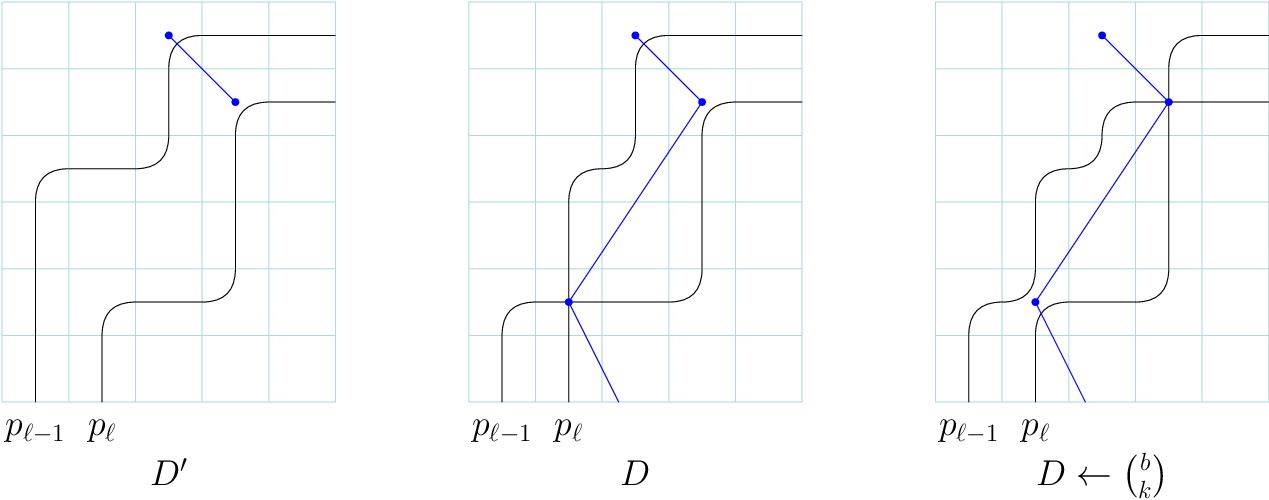}
                \caption{Case (1e)}
                \label{fig:jdt-insert-1d}
            \end{figure}       

            \item[(1f)] $p_{\ell-1}< i$ and $i+1 = p_\ell$. Note that $\mathcal{I}$ terminates by crossing pipe $p_{\ell-1}$ and $p_\ell$, in $\sigma$, every value between $p_{\ell-1}$ and $p_\ell$ has to be either greater than $p_{\ell}$ or smaller than $p_{\ell-1}$. This means that we cannot have $\sigma^{-1}(p_{\ell}-1) < \sigma^{-1}(i) < \sigma^{-1}(p_\ell)$. If $\sigma^{-1}(i)<\sigma^{-1}(p_{\ell-1})$, then there is a descent before $\sigma^{-1}(p_{\ell-1}) \leq k$, which is not possible. If $\sigma^{-1}(p_\ell) = \sigma^{-1}(i+1) < \sigma^{-1}(i)$, then it is impossible to perform a $\cross$ on pipe $i$. Hence, this case is not possible.

            \item[(1g)] $i = p_\ell$. Again, since $\mathcal{J}$ and $\mathcal{I}$ do not share any common point, in $D'$, $(x,i)$ is not on the same column as the point of pipe $i$ on $\mathcal{I}$. Thus, crossing pipe $i$ and $i+1$ does not affect $\mathcal{I}$, except the new insertion path now go through pipes $p_1, p_2, \ldots, p_{\ell-1}, p_\ell + 1$. This is case (3) in Lemma \ref{lem:jdt-insert}.
        \end{enumerate}
        
        \textbf{Case 2:} $\mathcal{J}$ and $\mathcal{I}$ share some common points. Observe that for $\mathcal{I}$, except for the $\maxdroop$ steps, all other steps move the path in the SW direction. On the other hand, for $\mathcal{J}$, all  steps move the path in the NW direction. Thus, if $\mathcal{J}$ and $\mathcal{I}$ share a sequence of points $(u_1,v_1),\ldots,(u_r,v_r)$, these points have to form a sequence of $\maxdroop$s in $\mathcal{I}$ and hence a sequence of $\recdroop$s in $\mathcal{J}$. In other words, we must have $(u_{j+1},v_{j+1}) = \maxd(u_j,v_j)$, and $(u_j,v_j) = \recd(u_{j+1},v_{j+1})$ for all $1\leq j < r$. Therefore, we have $v_j = v_1+(j-1)$. Furthermore, $(u_{j+1},v_{j+1}) = \maxd(u_j,v_j)$ for all $1\leq j < r$ means that each point $(u_j,v_j)$ for $2\leq j < r$ is a $\rt$. In addition, $\mathcal{J}$ goes through $(u_1,v_1)$ and $\mathcal{I}$ performs a $\maxdroop$ here, so $(u_1,v_1)$ is also a $\rt$. Thus, each point $(u_j,v_j)$, for $1\leq j < r$, is a $\rt$ of some pipe $\alpha_j$. Note that $\mathcal{I}$ goes through the pipes $\alpha_1,\ldots,\alpha_{r-1}$, so $\alpha_i$ and $\alpha_{i+1}$ cannot intersect for all $1\leq i < r-1$. We will first prove that in $D$, the new insertion path $D\leftarrow \binom{b}{k}$ goes through the $\rt$-tiles of pipes $\alpha_1,\ldots,\alpha_{r-1}$ in columns $v_1+1,\ldots,v_{r-1}+1$.

        First, we look at $(u_1,v_1)$.  There are only a few cases. If $\mathcal{J}$ performs a $\recdroop$ at $(u_1,v_1)$, then we claim that there is either no point before $(u_1,v_1)$ in $\mathcal{I}$, or the previous point is at $(u_1,v_1+e)$ for some $e$. This is because otherwise, $\mathcal{I}$ goes to $(u_1,v_1)$ after a $\maxdroop$, so the previous point is exactly $\recd(u_1,v_1)$ and $\mathcal{J}$ also goes through this point, which means $(u_1,v_1)$ is not the first point of the intersection of $\mathcal{I}$ and $\mathcal{J}$.
        Now after $\mathcal{J}$ performs a $\recdroop$ that droops pipe $\alpha_1$ in $(u_1,v_1)$ down to $(u_2,v_2)$, pipe $\alpha_1$ has a $\rt$-tile in column $v_2 = v_1+1$. In either case, the new insertion path goes through this $\rt$-tile (see Figure \ref{fig:jdt-insert-2a}).

        \begin{figure}[h!]
            \centering
            \begin{subfigure}[b]{0.4\textwidth}
                \centering
                \includegraphics[scale = 0.4]{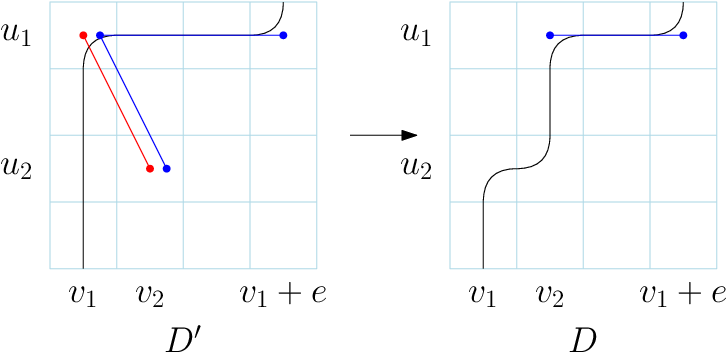}
                \caption{$\mathcal{I}$ does not start from $(u_1,v_1)$}
                \label{fig:jdt-insert-2a-fig1}
            \end{subfigure}
            \begin{subfigure}[b]{0.4\textwidth}
                \centering
                \includegraphics[scale = 0.4]{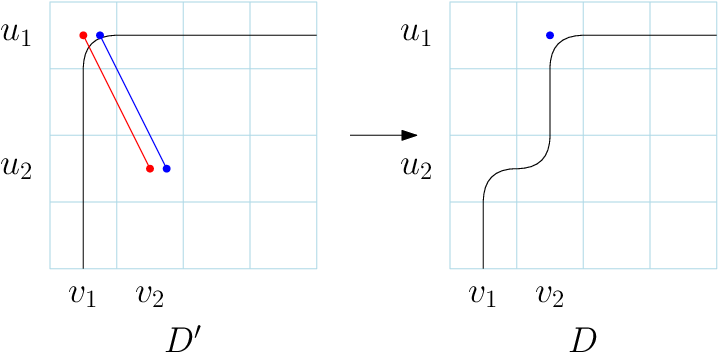}
                \caption{$\mathcal{I}$ starts from $(u_1,v_1)$}
                \label{fig:jdt-insert-2a-fig2}
            \end{subfigure}
            \caption{$\mathcal{I}$ at $(u_1,v_1)$}
            \label{fig:jdt-insert-2a}
        \end{figure}

        Now we apply the same argument for $(u_2,v_2),\ldots,(u_{r-1},v_{r-1})$. The key observation is that in $D$, the $\rt$-tile of pipe $\alpha_i$ in column $v_i+1$ is below the $\rt$-tile of pipe $\alpha_{i-1}$ in column $v_i$ since otherwise pipes $\alpha_{i-1}$ and $\alpha_i$ intersect. However, it is not below the $\jt$-tile of pipe $\alpha_{i-1}$ in column $v_i$ since $\mathcal{J}$ performs a $\recdroop$ that droops pipe $\alpha_{i-1}$ down to $(u_i,v_i)$. Thus, the new insertion path goes through exactly the $\rt$-tiles of pipes $\alpha_1,\ldots,\alpha_{r-1}$ in columns $v_1+1,\ldots,v_{r-1}+1$ (see Figure \ref{fig:jdt-insert-2b}).

        \begin{figure}[h!]
            \centering
            \includegraphics[scale = 0.4]{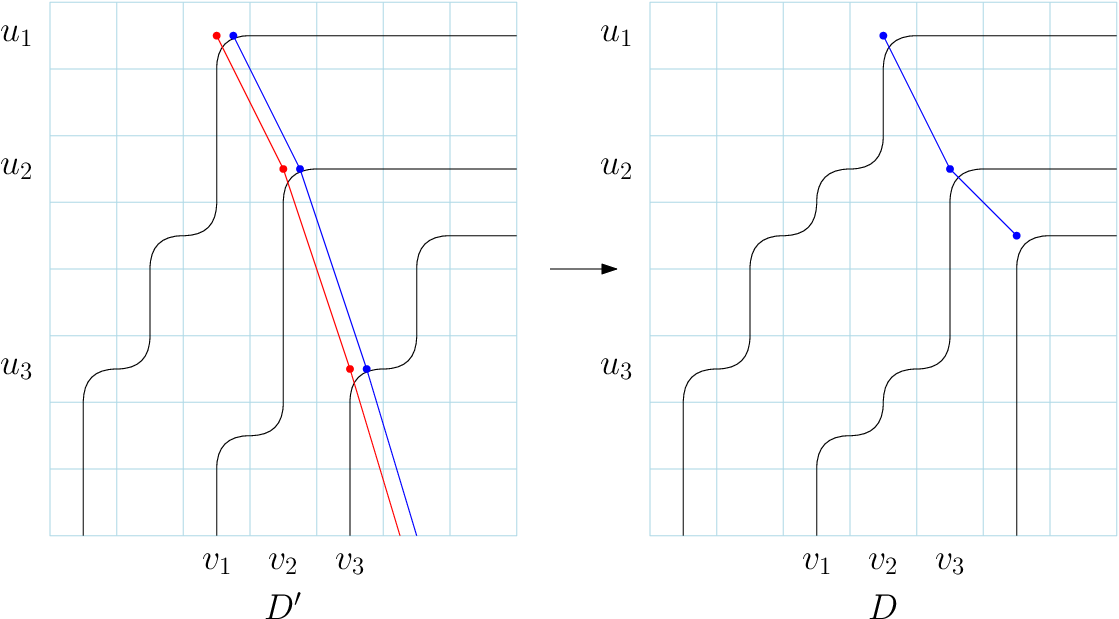}
            \caption{Segments of $D'\leftarrow\binom{b}{k}$ and $D\leftarrow\binom{b}{k}$ that go through the same pipes}
            \label{fig:jdt-insert-2b}
        \end{figure}

        Finally, we look at $(u_r,v_r)$. There are a few cases:

        \quad\textit{Case 2a:} $(u_r,v_r)$ is not the starting point of $\mathcal{J}$. Thus, there is a point on $\mathcal{J}$ prior to $(u_r,v_r)$. Since $(u_r,v_r)$ is a $\rt$, the previous point cannot be $(u_r,v_r+1)$. Thus, it has to be $(u, v_r+1)$ and $(u_r,v_r) = \recd(u,v_r+1)$. By the same argument above, the new insertion path of $D\leftarrow \binom{b}{k}$ goes through the $\rt$-tile of pipe $\alpha_r$ in column $v_r+1$. Let $(u_r', v_r+1)$ be the coordinate of this tile, we now show that after $(u_r',v_r+1)$, this new insertion path goes back to the next point on $\mathcal{I}$. First, note that since $(u_r,v_r)$ is a $\rt$, $\mathcal{I}$ performs a $\maxdroop$ here. Let $(v,v_r+1) := \maxd(u_r,v_r)$, we claim that $v>u$. This is because $(u_r,v_r) = \recd(u,v_r+1)$, so every $\rt$-tile between $(u,v_r+1)$ and $(u_r,v_r+1)$ intersects pipe $\alpha_r$ again. Also, note that $(u,v_r+1)$ cannot be the starting point of $\mathcal{J}$ since otherwise $(v, v_r+1)$ cannot exist. Now we consider the point prior to $(u,v_r+1)$ on $\mathcal{J}$.

        \begin{itemize}
            \item If it is $(u, v_r+2)$, then this square is a $\bl$. Thus, in $D\leftarrow \binom{b}{k}$, the new insertion path droops from $(u_r',v_r+1)$ down to $(u,v_r+2)$, then moves left to $(u,v_r)$, and finally droops down to $(v,v_r+1)$ (see Figure \ref{fig:jdt-insert-2c}).

            \item If it is $(u',v_r+2)$ where $(u,v_r+1) = \recd(u',v_r+2)$, then $(u,v_r+1)$ is a $\rt$. Thus, the pipe in $(u,v_r+1)$ intersects pipe $\alpha_r$ again. Hence, in $D\leftarrow \binom{b}{k}$, the new insertion path will perform a $\cbswap$ and move to $(v,v_r+1)$ (see Figure \ref{fig:jdt-insert-2d}).

            \begin{figure}[h!]
                \centering
                \begin{minipage}[c]{.5\textwidth}
                    \centering
                    \includegraphics[scale = 0.4]{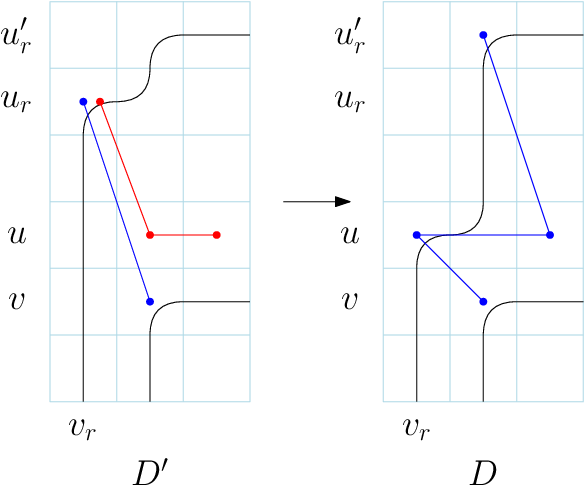}
                    \captionof{figure}{$\mathcal{J}$ moves right from $(u,v_r+1)$}
                    \label{fig:jdt-insert-2c}
                \end{minipage}%
                \begin{minipage}[c]{.5\textwidth}
                    \centering
                    \includegraphics[scale = 0.4]{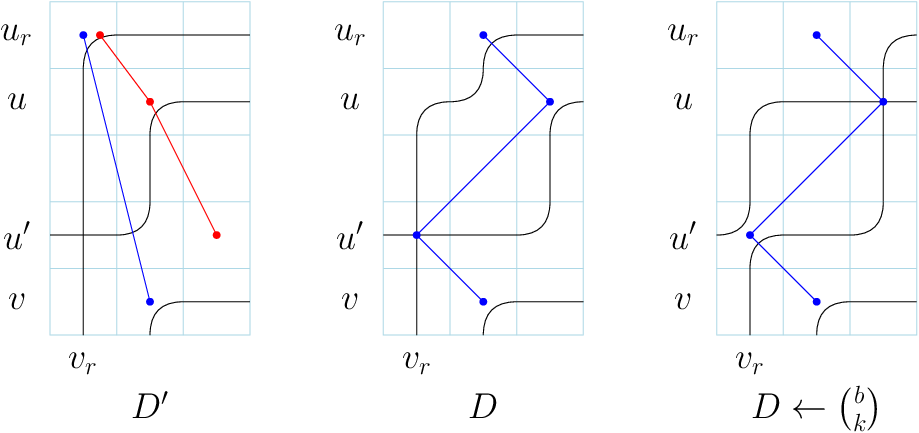}
                    \captionof{figure}{$\mathcal{J}$ $\recd$s at $(u,v_r+1)$}
                    \label{fig:jdt-insert-2d}
                \end{minipage}
            \end{figure}
        \end{itemize}

        \quad\textit{Case 2b:} $(u_r,v_r)$ is the starting point of $\mathcal{J}$. This means that pipe $v_r = \alpha_r$, and pipe $\alpha_r$ goes straight down from here. Then by Lemma \ref{lem:consec-pipe}, $\mathcal{I}$ goes through pipes $\alpha_r+1,\alpha_r+2,\ldots$. After $\mathcal{J}$ performs a $\cross$ at $(u_r,v_r)$, the new insertion path of $D\leftarrow \binom{b}{k}$ will go from $\alpha_{r-1}$ to $\alpha_r+1$ and then $\alpha_r+2,\ldots$ (see Figure \ref{fig:jdt-insert-2e}). Recall that $D\in \BPD(\pi)$ and $D'\in\BPD(\sigma)$ where $\sigma = s_i\pi$. In this case, we have $i = v_r = \alpha_r$. Recall from Case 1 that this only happens if $v_r = p_{\ell-1}$ or $v_r = p_\ell$. If $v_r = p_{\ell-1}$, then the new insertion path goes through pipes $u_1,\ldots,p_{\ell-2},p_\ell,p_\ell + 1,p_\ell+2,\ldots$ until it terminates. If $v_r = p_\ell$, then $(u_r,v_r)$ is also the endpoint of $\mathcal{I}$. Thus, the new insertion path of $D\leftarrow \binom{b}{k}$ goes from $p_{\ell-1}$ to $p_\ell + 1$ and terminates. This is consistent with the cases in Case 1.
        
        \begin{figure}[h!]
            \centering
            \includegraphics[scale = 0.4]{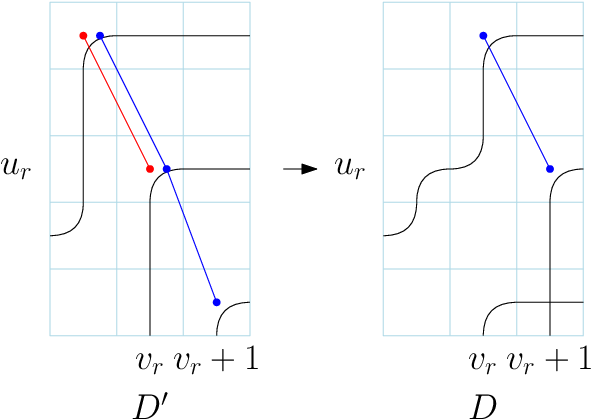}
            \caption{$\mathcal{J}$ starts from $(u_r,v_r)$}
            \label{fig:jdt-insert-2e}
        \end{figure}
        This completes the proof.
        
    \end{proof}

    \begin{lemma}
        With the same notation as in Theorem \ref{thm:local-rule}, suppose $D_\pi = \rect(D_\mu)$, then rules (1c) and (2) of Theorem \ref{thm:local-rule} is true.
    \end{lemma}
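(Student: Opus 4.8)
The plan is to recast both rules as statements about the two pipes crossed by a single insertion, and then to propagate a known crossing through the jeu de taquin slides that make up $\rect$, invoking Lemma~\ref{lem:jdt-insert} at each slide. Since $D_\pi=\rect(D_\mu)$, the horizontal edges of the square are insertions of the biletter $\binom{b}{k}$ with $b=b_j,\ k=k_j$: we have $D_\rho=\bigl(D_\mu\leftarrow\binom{b}{k}\bigr)$ in both cases, while $D_\sigma=\bigl(D_\pi\leftarrow\binom{b}{k}\bigr)$ crosses pipes $\alpha<\beta$ in rule (1c), and $\sigma=\pi$ in rule (2) (there $b=i$, so the biletter is deleted from $w(i,j)$). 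The invariant $D_\sigma=\rect(D_\rho)$ is automatic from Theorem~\ref{thm:jdt}: the BPD $D_\rho$ encodes $w(i-1,j)$, whose minimal row is $i$, and deleting its row-$i$ biletters recovers $w(i,j)$, the word of $D_\sigma$. Thus it remains only to determine which two pipes $D_\mu\leftarrow\binom{b}{k}$ crosses, i.e.\ the transposition carrying $\mu$ to $\rho$.

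I will establish this crossing by induction on $m=|I|$, using $D_\pi=\rect(D_\mu)=\nabla^{m}(D_\mu)$. The base $m=0$ means $D_\mu=D_\pi$. For rule (1c) the crossing of $D_\mu\leftarrow\binom{b}{k}=D_\pi\leftarrow\binom{b}{k}$ is $\{\alpha,\beta\}$ by hypothesis, and $A=\{\alpha\}\cup[\beta,\infty)$ then selects the straddling pair $(\alpha,\beta)$, giving $\rho=t_{\alpha\beta}\mu=\sigma$. For rule (2), $D_\mu$ has no empty tile in row $i$, so inserting $\binom{i}{k}$ runs through a block of consecutive pipes (Lemma~\ref{lem:consec-pipe}) and crosses two consecutive integers $g,g+1$ with $\mu^{-1}(g)\le k<\mu^{-1}(g+1)$; this is exactly rule (2) with $I^{C}=\{1,2,\dots\}$.

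For the inductive step, let $D_1=\nabla(D_\mu)$ with $\pop(D_\mu)=(c,i)$, so that $D_\mu=\Delta_{i,c}(D_1)$, $\rect(D_1)=D_\pi$, and $I=\{c\}\sqcup I_1$ where $I_1$ is the reflection set of $D_1\to D_\pi$. By the inductive hypothesis the crossing of $D_1\leftarrow\binom{b}{k}$ is the straddling consecutive pair of the set built from $\alpha,\beta$ and $I_1$. Applying Lemma~\ref{lem:jdt-insert} with popped pipe $c$ transfers this crossing to $D_\mu$: if $c$ lies off the insertion path the crossing is unchanged (case (4)); if $c$ is an interior or terminal path pipe the relevant member is bumped from $c$ to $c+1$ (cases (1),(3)); and if $c,c+1$ are the last two path pipes the crossing is pushed upward and the path extended (case (2)). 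Accumulating over the reintroduced pipes of $I$ yields the claimed description: pipes $c<\alpha$ leave the crossing fixed; pipes $c\in I\cap[\alpha,\beta)$ raise the lower crossing pipe to the first member of $I^{C}$ in $[\alpha,\beta)$, namely $x$; and pipes $c\in I\cap[\beta,\infty)$ delete themselves from the admissible upper pipes, leaving $I^{C}\cap[\beta,\infty)$. The admissible crossing values are therefore exactly $A$, and the straddle index $\ell$ is preserved because each reverse jeu de taquin $\cross$ shifts one-line positions in accordance with the position comparisons already recorded in the cases of Lemma~\ref{lem:jdt-insert}.

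The hard part is precisely this accumulation. One must verify that composing the local path changes of Lemma~\ref{lem:jdt-insert} over all $m$ slides produces exactly the set $A$ (and, for rule (2), consecutive members of $I^{C}$) together with the correct straddle choice relative to $k$; in particular that the ``impossible'' configurations of that lemma (cases (1b),(1d),(1f), where a descent would be forced before position $k$) never arise. This is guaranteed by the plactic hypothesis, which keeps $k$ at most the first descent of the permutation of every intermediate BPD $E_t$, so that Lemma~\ref{lem:jdt-insert} applies at each step. Pinning down the order in which the popped pipes $c$ are reintroduced, compatibly with the reduced factorization $\mu=s_{i_j}\cdots s_{i_1}\pi$ of Corollary~\ref{cor:jdt-si}, is the remaining bookkeeping.
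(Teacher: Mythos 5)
Your overall strategy is the same as the paper's: reduce both rules to identifying the two pipes crossed by the terminal move of $D_\mu\leftarrow\binom{b}{k}$, and obtain this by propagating the known crossing of $D_\pi\leftarrow\binom{b}{k}$ back through the sequence of reverse jeu de taquin slides joining $D_\pi=\rect(D_\mu)$ to $D_\mu$, applying Lemma~\ref{lem:jdt-insert} once per slide. Your base case, your use of Lemma~\ref{lem:consec-pipe} for rule (2), and your observation that the ``impossible'' cases of Lemma~\ref{lem:jdt-insert} are excluded by the first-descent hypothesis all match the paper.

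The gap is that the step you yourself label ``the remaining bookkeeping'' is in fact the entire content of the proof, and it is not routine. Two things must actually be verified. First, to apply Lemma~\ref{lem:jdt-insert} at a given slide you must know \emph{which} of its cases applies, i.e.\ whether the popped pipe $c$ equals the current penultimate pipe, the current last pipe, an interior pipe, or none of these; your inductive hypothesis only records the final crossing pair, which is not enough information to select the case. The paper resolves this by exploiting the structure of $I$ directly: since $x=\min(I^C\cap[\alpha,\beta))$ and $y=\min(I^C\cap[\beta,\infty))$, the sets $\{\alpha,\dots,x-1\}$ and $\{\beta,\dots,y-1\}$ are consecutive runs contained in $I$, and the corresponding reverse slides are applied at $s_\alpha,s_{\alpha+1},\dots$ in order, so that at the step using $s_{\alpha+r}$ the lower crossing pipe is exactly $\alpha+r$ and case (1) (resp.\ case (3) for the $\beta$-run) applies with the correct index; pipes of $I$ below $\alpha$ or strictly between the runs are shown to leave the last two pipes untouched. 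Your induction, which peels off $\nabla$ from the $D_\mu$ side and hence processes $\max I$ first, would instead need a lemma relating $\min(I^C\cap[\alpha,\beta))$ to $\min\bigl((I\setminus\{\max I\})^C\cap[\alpha,\beta)\bigr)$ and tracking how the crossing pair of $D_1\leftarrow\binom{b}{k}$ differs from that of $D_\mu\leftarrow\binom{b}{k}$; this is doable but is precisely what must be written down. Second, your argument only produces the pair $(j_1,j_2)=(x,y)$, whereas rule (1c) asserts the crossing is $(j_\ell,j_{\ell+1})$ for the \emph{smallest} $\ell$ with $\mu^{-1}(j_\ell)\le k<\mu^{-1}(j_{\ell+1})$, which can have $\ell>1$. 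This happens exactly when $I^C\cap[\alpha,\beta)=\emptyset$, where case (2) of Lemma~\ref{lem:jdt-insert} extends the insertion path through pipes $\beta,\beta+1,\dots$ and the terminal crossing moves further along $A$; the paper isolates this as a separate case and iterates, and your proposal does not address it. Finally, a small organizational point: you invoke Theorem~\ref{thm:jdt} to conclude $D_\sigma=\rect(D_\rho)$, but that theorem is proved later (via Lemma~\ref{lem:jdt-commute}) and is not needed for the present lemma, whose only hypothesis is $D_\pi=\rect(D_\mu)$; it is cleaner not to rely on it here.
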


    \begin{proof}
    Recall that we consider a square 
            \[
        \begin{tikzcd}[sep=tiny]
        	\pi && \sigma \\
        	& {\ } \\
        	\mu && \rho
        	\arrow[no head, from=1-1, to=1-3]
        	\arrow[no head, from=1-1, to=3-1]
        	\arrow[no head, from=3-1, to=3-3]
        	\arrow[no head, from=1-3, to=3-3]
        \end{tikzcd}
        \]
        First of all, suppose $\mu$ is on row $a$ of the growth diagram.
    
        \textbf{Claim 1:} Rule (1c) is true. Recall that this rule is as follows. Assume there is no $\times$ in the square. If $\pi \neq \sigma, \mu$, then  $\mu = s_{i_j}\ldots s_{i_1}\pi$ where $I = \{i_j>\ldots>i_1\}$, and $\sigma = t_{\alpha\beta}\pi $ such that $\pi^{-1}(\alpha)\le k <\pi^{-1}(\beta)$ for some $\alpha<\beta$. 
        Let $x := \min(I^C \cap [\alpha,\beta))$, and $A := (I^C\cap [\beta,\infty)) \cup \{x\} = \{j_1<j_2<\ldots\}$. Then $\rho = t_{j_\ell,j_{\ell+1}}\mu$ where $\ell$ is the smallest index such that $\mu^{-1}(j_\ell) \leq k < \mu^{-1}(j_{\ell+1})$.

        Since $\sigma = t_{\alpha\beta}\pi$ for $\pi^{-1}(\alpha) \leq k < \pi^{-1}(\beta)$ for some $k$, we have $D_\sigma = D_\pi \leftarrow \binom{b}{k}$ for some $b > a$. This means that the insertion path of $D_\pi \leftarrow \binom{b}{k}$ goes through pipes $p_1<p_2<\ldots<\alpha<\beta$. Let $c$ be the number of $\bl$-tiles on row $a$ in $D_\mu$, then $D_\pi = \nabla^c(D_\mu)$, by the assumption that $D_\pi = \rect(D_\mu)$  Consider the sequence of $\BPD$s
        \[ \nabla^c(D_\mu) \xrightarrow{\Delta_{a,s_{i_c}}} \nabla^{c-1}(D_\mu) \xrightarrow{\Delta_{a,s_{i_{c-1}}}} \ldots \xrightarrow{\Delta_{a,s_{i_2}}} \nabla(D_\mu) \xrightarrow{\Delta_{a,s_{i_1}}} D_\mu. \]
        Note that this means $I = I(D_\mu) = \{i_1,\ldots,i_c\}$. Let $u,v$ be the index of the largest element in $I$ that is smaller than $\alpha,\beta$ respectively. Then, $i_c,\ldots,i_u < \alpha$. Thus, by Lemma \ref{lem:jdt-insert},  after
        \[ \nabla^c(D_\mu) \xrightarrow{\Delta_{a,s_{i_c}}} \nabla^{c-1}(D_\mu) \xrightarrow{\Delta_{a,s_{i_{c-1}}}} \ldots \xrightarrow{\Delta_{a,s_{i_{u+1}}}} \nabla^{u}(D_\mu) \xrightarrow{\Delta_{a,s_{i_u}}} \nabla^{u-1}(D_\mu), \]
        the last two pipes of $\left( \nabla^{u-1}(D_\mu) \leftarrow \binom{b}{k} \right)$ are still $\alpha$ and $\beta$. Let $x = \min(I^C\cap [\alpha,\beta))$ and $y = \min(I^C\cap [\beta,\infty))$, we have a few cases. 
        
        \textbf{Case 1:} $x = \alpha$, then $i_{u-1} > \alpha$.

        \quad\textit{Case 1a:} $y = \beta$, then none of $i_{u-1},\ldots,i_1$ is $\alpha$ or $\beta$. Hence, by Lemma \ref{lem:jdt-insert}, in $D_\mu$, the last two pipes of $\left( D_\mu \leftarrow \binom{b}{k} \right)$ are still $\alpha$ and $\beta$. 

        \quad\textit{Case 1b:} $y > \beta$, then after
        \[ \nabla^{u-1}(D_\mu) \xrightarrow{\Delta_{a,s_{i_{u-1}}}}  \ldots \xrightarrow{\Delta_{a,s_{i_v}}} \nabla^{v-1}(D_\mu), \]
        the last two pipes of $\left( \nabla^{v-1}(D_\mu) \leftarrow \binom{b}{k} \right)$ are still $\alpha$ and $\beta$ since none of $i_{u-1},\ldots,i_v$ is $\alpha$ or $\beta$. Note that $\beta = i_{v-1}$, and $\{\beta,\beta+1,\ldots,y-1\} \subseteq I$. Then in the segment
        \[ \nabla^{v-1}(D_\mu) \xrightarrow{\Delta_{a,s_{\beta}}} \nabla^{v-2}(D_\mu) \xrightarrow{\Delta_{a,s_{\beta+1}}} \ldots \xrightarrow{\Delta_{a,s_{y-2}}} \nabla^{v-y+\beta}(D_\mu) \xrightarrow{\Delta_{a,s_{y-1}}} \nabla^{v-y+\beta-1}(D_\mu), \]
        after $\Delta_{a,s_{\beta+r}}$, the last two pipes of $\left( \nabla^{v-2-r}(D_\mu) \leftarrow \binom{b}{k} \right)$ are $\alpha$ and $\beta+r+1$, by case (3) in Lemma \ref{lem:jdt-insert}. Thus, in $\nabla^{v-y+\beta-1}(D_\mu)$, the last two pipes of $\left( \nabla^{v-y+\beta-1}(D_\mu) \leftarrow \binom{b}{k} \right)$ are $\alpha$ and $y$. None of the subsequent reverse jdts performs a $\cross$ at pipe $\alpha$ of $y$, so in $D_\mu$, the last two pipes of $\left( D_\mu \leftarrow \binom{b}{k} \right)$ are still $\alpha$ and $y$.

        \textbf{Case 2:} $ x > \alpha$. This means that $\{\alpha,\alpha+1,\ldots,x-1\}\subseteq I$. Then in the segment
        \[ \nabla^{u-1}(D_\mu) \xrightarrow{\Delta_{a,s_{\alpha}}} \nabla^{u-2}(D_\mu) \xrightarrow{\Delta_{a,s_{\alpha+1}}} \ldots \xrightarrow{\Delta_{a,s_{x-2}}} \nabla^{u-x+\alpha}(D_\mu) \xrightarrow{\Delta_{a,s_{x-1}}} \nabla^{u-x+\alpha-1}(D_\mu), \]
        after $\Delta_{a,s_{\alpha+r}}$, the last two pipes of $\left( \nabla^{u-2-r}(D_\mu) \leftarrow \binom{b}{k} \right)$ are $\alpha+r+1$ and $\beta$, by case (1) in Lemma \ref{lem:jdt-insert}. Thus, in $\nabla^{u-x+\alpha-1}(D_\mu)$, the last two pipes of $\left( \nabla^{u-x+\alpha-1}(D_\mu) \leftarrow \binom{b}{k} \right)$ are $x$ and $\beta$. Repeating the same argument as in case 1 for the remaining steps, we have the last two pipes of $\left( D_\mu \leftarrow \binom{b}{k} \right)$ are $x$ and $y$.

        \textbf{Case 3:} $x$ does not exists. This means that $\{\alpha,\alpha+1,\ldots,\beta-1\}\subseteq I$. Similar to case 2, let $D_1$ be the BPD after $\Delta_{a,s_{\beta-2}}$, then the last two pipes of $\left( D_1\leftarrow \binom{b}{k} \right)$ are $\beta-1$ and $\beta$. By rule (2) in Lemma \ref{lem:jdt-insert}, let $D_2$ be the BPD after $\Delta_{a,s_{\beta-1}}$, then the insertion path of $\left( D_2\leftarrow \binom{b}{k} \right)$ goes through pipes $\beta,\beta+1,\beta+2,\ldots$ until it terminates. Repeating the same argument for $D_2$, we obtain rule (1c).

        \textbf{Claim 2:} Rule (2) is true.

        This rule is actually the same as the previous rule. Note that in this case, $b = a$, so $\pi$ is on row $b+1$. Hence, pipe $b$ is untouched in $D_\pi$. Thus, $D_\pi \leftarrow \binom{b}{k}$ starts at square $(b,b)$, which is the $\rt$-tile of pipe $b$ in column $b$. By Lemma \ref{lem:consec-pipe}, this means that $D_\pi\leftarrow \binom{b}{k}$ goes through pipes $b,b+1,b+2,\ldots$ until it terminates. By the same argument as in case 3 above, we obtain rule (2).
    \end{proof}

    The last thing we need to prove is Theorem \ref{thm:jdt}. It suffices to prove the following lemma.

    \begin{lemma}\label{lem:jdt-commute}
        Let $D\in\BPD(\pi)$ with $\pop(D) = (i,c)$, and $D' = \nabla(D)$. Given $b\geq c$ and $k$ such that the smallest descent in $\pi$ is at least $k$. Then
        \[ \nabla\left( D\leftarrow \binom{b}{k} \right) = D' \leftarrow \binom{b}{k}. \] 
    \end{lemma}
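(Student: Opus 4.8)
The plan is to exhibit $D'$, $D$, $E':=D'\leftarrow\binom{b}{k}$, and $E:=D\leftarrow\binom{b}{k}$ as the four corners of a commuting square of local moves: two opposite sides are right insertion of $\binom{b}{k}$, and the other two are the reversed jeu de taquin $\Delta_{c,i}$ that realizes $D=\Delta_{c,i}(D')$. Since $\nabla$ and $\Delta$ are mutually inverse, it suffices to show that the reversed jeu de taquin move which carries $D'$ to $D$, when instead applied to $E'$, produces $E$ and still terminates at row $c$; then $\nabla(E)$ starts at row $c$ and pops back to $E'$, giving the claim. The crucial observation is that this is almost exactly the interaction already analyzed in Lemma~\ref{lem:jdt-insert}: there $\mathcal{J}$, the reversed jeu de taquin path of $\Delta_{c,i}(D')$, is run against $\mathcal{I}$, the insertion path of $D'\leftarrow\binom{b}{k}$, and the proof tracks, tile by tile, how $\mathcal{J}$ deforms $\mathcal{I}$. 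Since $E'$ differs from $D'$ only along $\mathcal{I}$, running $\mathcal{J}$ on $E'$ reroutes around precisely the tiles that $\mathcal{I}$ introduced, and the same bookkeeping that produced the modified insertion path in $D$ now reproduces the tiles of $E$.

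First I would verify the starting row. The insertion path stays in rows $\ge b\ge c$, so $D$ and $E$ agree on all tiles in rows below $b$; hence when $c<b$ the smallest row of $E$ carrying a blank, together with its rightmost blank tile, coincides with that of $D$, and $\nabla(E)$ begins exactly where $\nabla(D)$ does. The boundary case $b=c$ must be treated separately, checking that the opening $\mindroop$ of the insertion does not move the rightmost blank of row $c$. Throughout, the hypothesis that the smallest descent of $\pi$ is at least $k$ is what licenses the use of Lemma~\ref{lem:consec-pipe} and the $\maxdroop$ description of insertion, and hence the applicability of Lemma~\ref{lem:jdt-insert}.

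When $\mathcal{I}$ and $\mathcal{J}$ share no square (Case~1 of Lemma~\ref{lem:jdt-insert}), the two moves touch disjoint tiles apart from the initial $\cross$ of pipes $i,i+1$; in each subcase the recorded relabeling of the pipes of the insertion path shows directly that running $\mathcal{J}$ on $E'$ crosses the correspondingly relabeled pipes and yields $E$, with the inserted crossing sitting in a column untouched by $\mathcal{J}$. The main obstacle is the shared-path case (Case~2), where $\mathcal{I}$ and $\mathcal{J}$ overlap in a run of squares that are simultaneously $\maxdroop$ steps for $\mathcal{I}$ and $\recdroop$ steps for $\mathcal{J}$. Here the moves cannot be separated, and one must check that the reversed jeu de taquin on $E'$ reroutes along the $\rt$-tiles in the columns $v_1+1,\dots,v_{r-1}+1$ identified in the proof of Lemma~\ref{lem:jdt-insert}, and that the terminal subcases (the analogues of Cases~2a and 2b, i.e.\ the interaction of $\uncross$ with the endpoint of $\mathcal{I}$) place the final crossing in the correct column. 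Once this tile-level verification is done the square commutes, giving $\nabla(E)=E'$; Theorem~\ref{thm:jdt} then follows by induction, peeling the biletters of the minimal row one at a time and using that $\rect$ is the composite of the $\jdt$ moves on the blanks of that row.
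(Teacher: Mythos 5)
Your proposal follows essentially the same route as the paper: it reduces the claim to the interaction between the insertion path $\mathcal{I}$ and the reversed jeu de taquin path $\mathcal{J}$, reuses the case analysis of Lemma~\ref{lem:jdt-insert} (disjoint paths versus a shared run of $\maxdroop$/$\recdroop$ squares, plus the terminal subcases at $(u_r,v_r)$), and concludes that the square of moves commutes; the only cosmetic difference is that you phrase the commutation as $\Delta_{c,i}(E')=E$ rather than $\nabla(E)=E'$, and you make explicit the (worthwhile) check that $\nabla(E)$ starts at the same square as $\nabla(D)$. Be aware, though, that the substance of the paper's proof lies precisely in the tile-level verifications you defer in the shared-path case and its endpoints, so those still need to be carried out for the argument to be complete.
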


    \begin{proof}
        Suppose $\pop(D) = (i,c)$, let $\mathcal{I}$ be the insertion path $D' \leftarrow \binom{b}{k}$ and $\mathcal{J}$ be the reversed $\jdt$ path $\Delta_{c,i}(D')$. Our case analysis will be similar to the proof of Lemma \ref{lem:jdt-insert}.

        If $\mathcal{I}$ and $\mathcal{J}$ do not share any common points, then the only case we need to consider is case (1e) in Lemma \ref{lem:jdt-insert}. In this case, $\mathcal{J}$ begins with a $\cross(x,i)$ that crosses pipes $i$ and $i+1$. $\mathcal{I}$ also ends by replacing a $\bt$ between pipes $i$ and $i+1$ by a $\+$. In $D$, the insertion path of $D \leftarrow \binom{b}{k}$ performs a $\cbswap$ at this $\bt$ and moves to $\crss(x,i)$. From here, this insertion path performs some consecutive $\maxdroop$s until it terminates. Thus, $\nabla\left( D\leftarrow \binom{b}{k} \right)$ undoes these additional $\maxdroop$s, giving $D' \leftarrow \binom{b}{k}$ (see Figure \ref{fig:jdt-commute-1e}).

        \begin{figure}[h!]
            \centering
            \includegraphics[scale = 0.4]{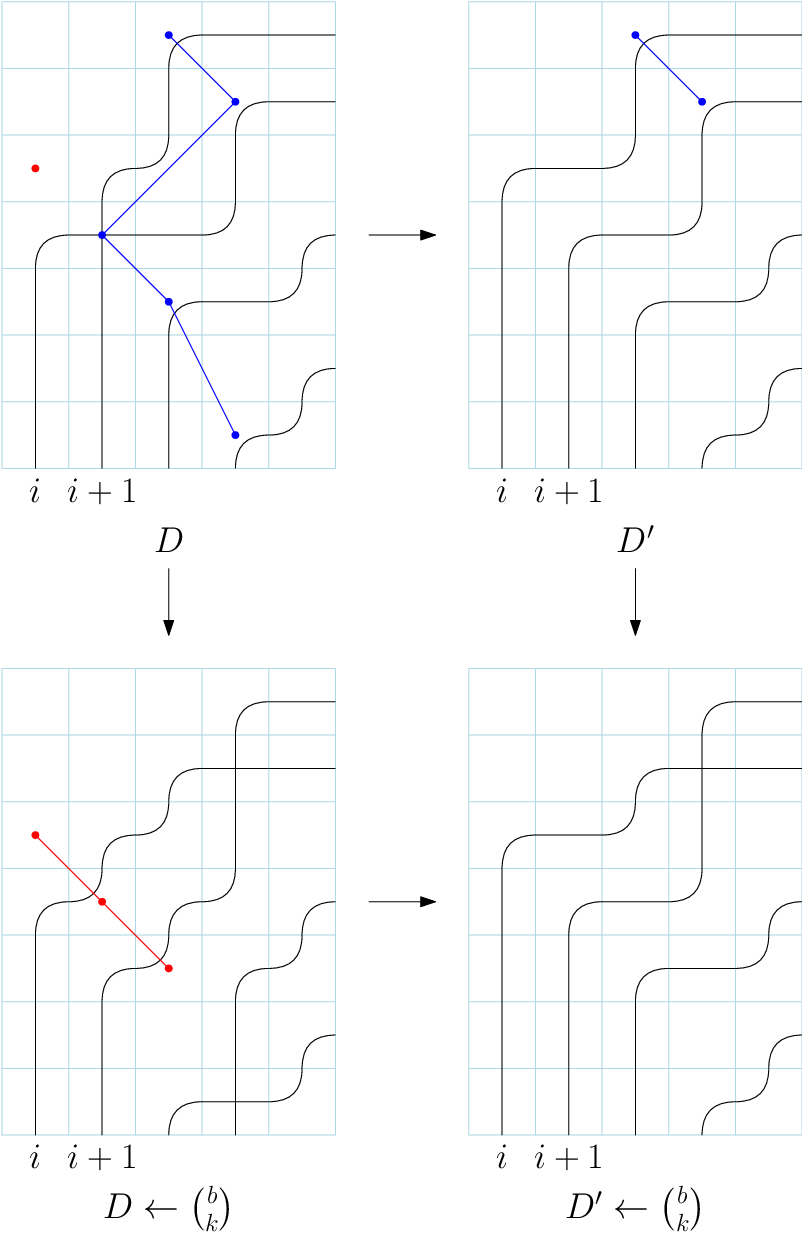}
            \caption{$\mathcal{J}$ $\crss$es at $(x,i)$}
            \label{fig:jdt-commute-1e}
        \end{figure}

        If $\mathcal{I}$ and $\mathcal{J}$ share some common points, then by Lemma \ref{lem:jdt-insert}, these points have to form a consecutive sequence of $\maxdroop$s in $\mathcal{I}$. Once again, let these points be $(u_1,v_1),\ldots,(u_r,v_r)$, and recall that for all $1\leq j < r$, $(u_{j+1},v_{j+1}) = \maxd(u_j,v_j)$, $(u_j,v_j) = \recd(u_{j+1},v_{j+1})$, and $(u_j,v_j)$ is an $\rt$-tile of some pipe $\alpha_j$. This means that pipes $\alpha_1,\ldots,\alpha_{r-1}$ are the same in $D' \leftarrow \binom{b}{k}$ and $D$. In $D$, recall that the new insertion path of $D \leftarrow \binom{b}{k}$ goes through the $\rt$-tiles of pipes $\alpha_1,\ldots,\alpha_{r-1}$ in columns $v_1+1,\ldots,v_{r-1}+1$. Specifically, it performs a $\maxdroop$ at $(u_1, v_1+1)$, which makes this square a $\bl$ in $D \leftarrow \binom{b}{k}$. Thus, the $\jdt$ path of $\nabla\left( D \leftarrow \binom{b}{k} \right)$ goes from $(u_1,v_1)$ to $(u_1,v_1+1)$ and then undoes the subsequent $\maxdroop$s (see Figure \ref{fig:jdt-commute-2a}). Note that the $\jdt$ path of $\nabla\left( D \leftarrow \binom{b}{k} \right)$ goes through exactly the points on the insertion path of $D \leftarrow \binom{b}{k}$.

        \begin{figure}[h!]
            \centering
            \includegraphics[scale = 0.4]{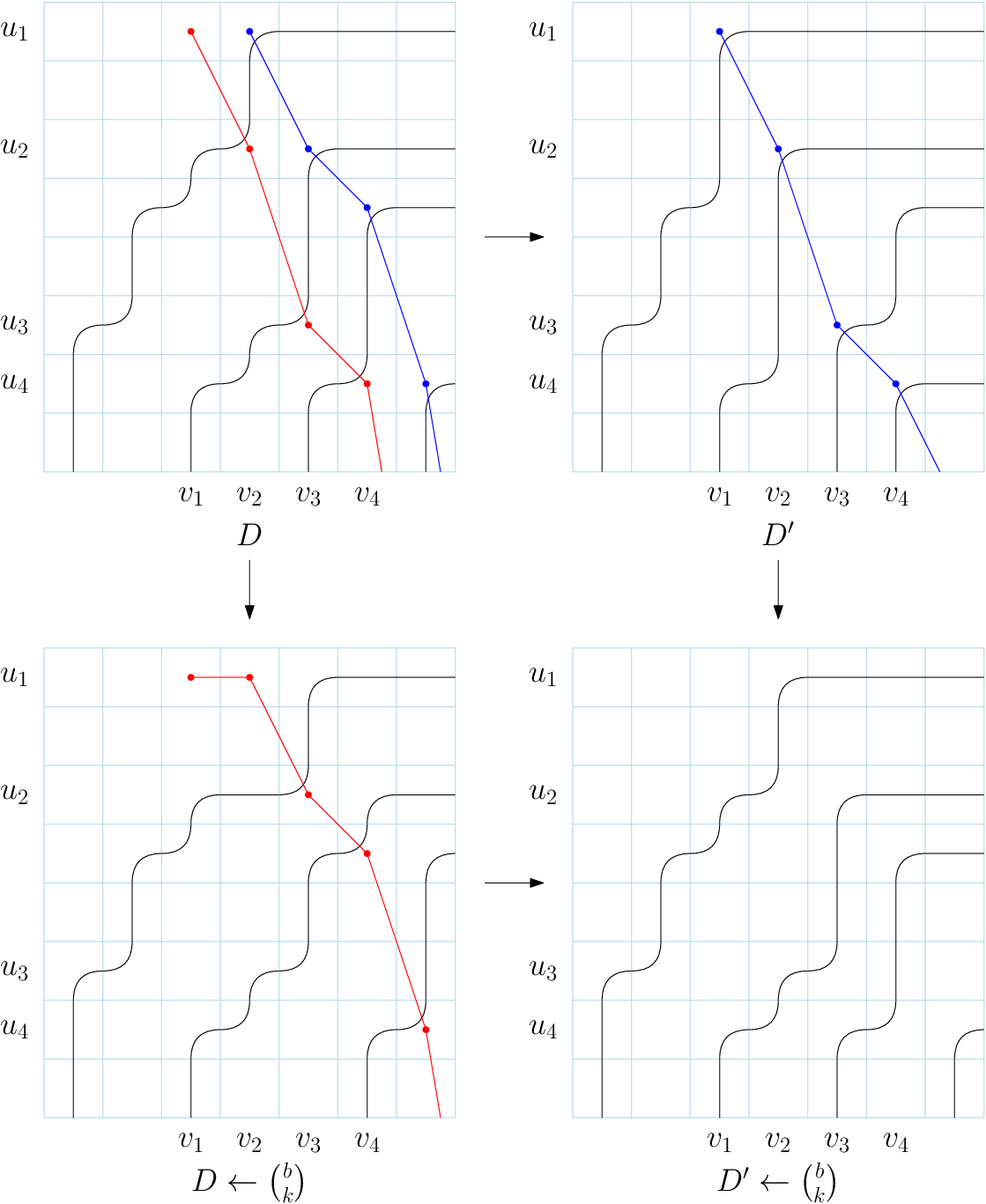}
            \caption{When $\mathcal{I}$ and $\mathcal{J}$ share common points}
            \label{fig:jdt-commute-2a}
        \end{figure}

        Finally, at $(u_r,v_r)$, we have a few cases as before. If $(u_r,v_r)$ is not the starting point of $\mathcal{J}$, then the previous point $\mathcal{J}$ is $(u,v_r+1)$ such that $(u_r,v_r) = \recd(u,v_r+1)$. We proved in Lemma \ref{lem:jdt-insert} that $(u,v_r+1)$ is also not the starting point of $\mathcal{J}$, so we consider the point prior to $(u,v_r+1)$ on $\mathcal{J}$.

        \begin{itemize}
            \item If it is $(u,v_r+2)$, then recall that the new insertion path of $D \leftarrow \binom{b}{k}$ droops from $(u_r',v_r+1)$, which is the $\rt$-tile of pipe $\alpha_r$ in column $v_r+1$, down to $(u,v_r+2)$. Thus, the $\jdt$ path of $\nabla\left( D \leftarrow \binom{b}{k} \right)$ also goes from $(u_r',v_r+1)$ to $(u,v_r+2)$ and then continue in the same path as of $\nabla(D)$ (see Figure \ref{fig:jdt-commute-2b}).

            \item If it is $(u',v_r+2)$ where $(u,v_r+1) = \recd(u',v_r+2)$, then the insertion path of $D \leftarrow \binom{b}{k}$ performs a $\maxdroop$ from $(u'_r,v_r+1)$ to $(u,v_r+2)$ followed by a $\cbswap$. Thus, in $D \leftarrow \binom{b}{k}$, pipe $\alpha_r$ turns at $(u'_r, v_r+2)$, then runs down and turns again at $(u',v_r+2)$. Thus, the $\jdt$ path of $\nabla\left( D \leftarrow \binom{b}{k} \right)$ also goes from $(u_r',v_r+1)$ to $(u,v_r+2)$ and then continue in the same path as of $\nabla(D)$ (see Figure \ref{fig:jdt-commute-2c}).
        \end{itemize}

        \begin{figure}[h!]
            \centering
            \begin{minipage}[c]{.5\textwidth}
                \centering
                \includegraphics[scale = 0.4]{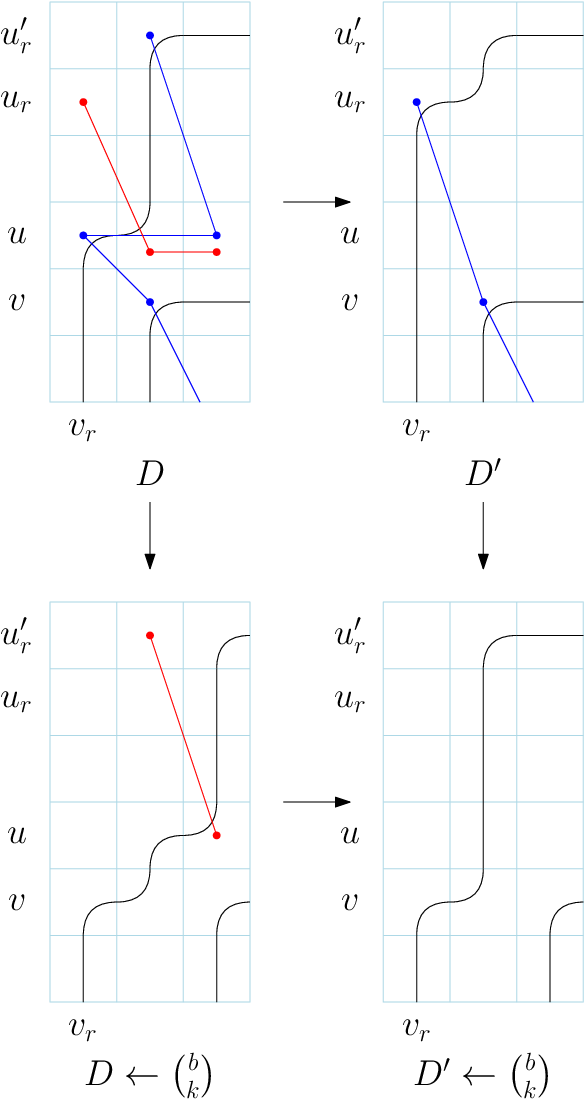}
                \captionof{figure}{$\mathcal{J}$ moves right from $(u,v_r+1)$}
                \label{fig:jdt-commute-2b}
            \end{minipage}%
            \begin{minipage}[c]{.5\textwidth}
                \centering
                \includegraphics[scale = 0.4]{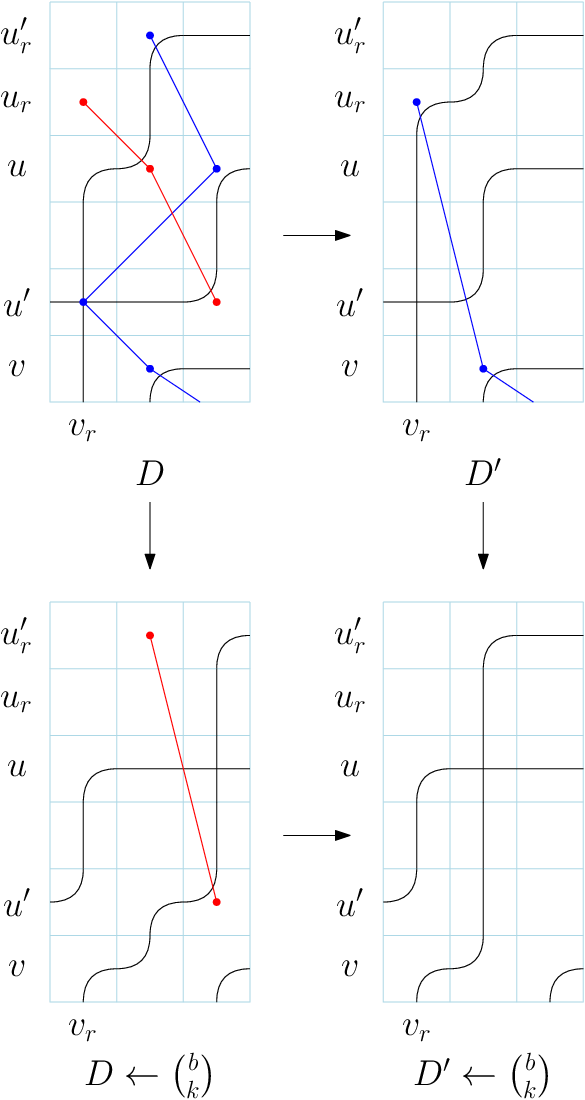}
                \captionof{figure}{$\mathcal{J}$ $\recd$s at $(u,v_r+1)$}
                \label{fig:jdt-commute-2c}
            \end{minipage}
        \end{figure}

        If $(u_r,v_r)$ is the starting point of $\mathcal{J}$, then recall that $v_r = \alpha_r$, and either $\mathcal{I}$ also ends at $(u_r,v_r)$, or it has exactly one more $\maxdroop$.

        \begin{itemize}
            \item In the former case, $D' \leftarrow \binom{b}{k}$ has pipes $\alpha_r$ and $\alpha_{r-1}$ crossing at $(u_r,v_r)$. In $D$, pipes $\alpha_r$ and $\alpha_r+1$ cross in column $v_r + 1$. The insertion path of $D \leftarrow \binom{b}{k}$ droops from $(u'_{r-1},v_r)$ down to $(u'_r,v_r+1)$ and terminates (by replacing the $\bt$ with a $\+$). Thus, in $D \leftarrow \binom{b}{k}$, the $\rt$-tile in square $(u'_{r-1},v_r+1)$ belongs to pipe $\alpha_r+1$, which goes straight down. Hence, the $\jdt$ path of $\nabla\left( D \leftarrow \binom{b}{k} \right)$ goes to square $(u'_{r-1},v_r)$ and performs an $\uncross$ here. This gives $D' \leftarrow \binom{b}{k}$ (see Figure \ref{fig:jdt-commute-2d}).

            \item In the latter case, recall that the insertion path of $D \leftarrow \binom{b}{k}$ goes from pipe $\alpha_{r-1}$ to pipes $\alpha_{r}+1,\alpha_r + 2,\ldots$ by consecutive $\maxdroop$s until it terminates. Thus, $\nabla\left( D\leftarrow \binom{b}{k} \right)$ goes to $(u'_{r-1},v_r)$ and then undoes these additional $\maxdroop$s, giving $D' \leftarrow \binom{b}{k}$ (see Figure \ref{fig:jdt-commute-2e}).
        \end{itemize}

        \begin{figure}[h!]
            \centering
            \begin{minipage}[c]{.5\textwidth}
                \centering
                \includegraphics[scale = 0.4]{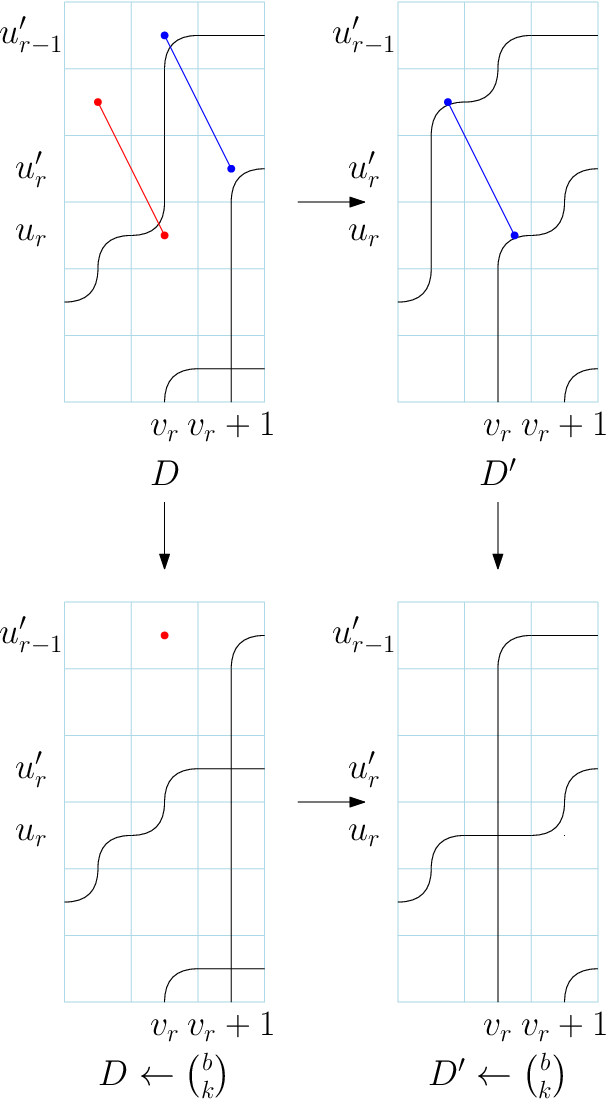}
                \captionof{figure}{$\mathcal{I}$ ends at $(u_r,v_r)$}
                \label{fig:jdt-commute-2d}
            \end{minipage}%
            \begin{minipage}[c]{.5\textwidth}
                \centering
                \includegraphics[scale = 0.4]{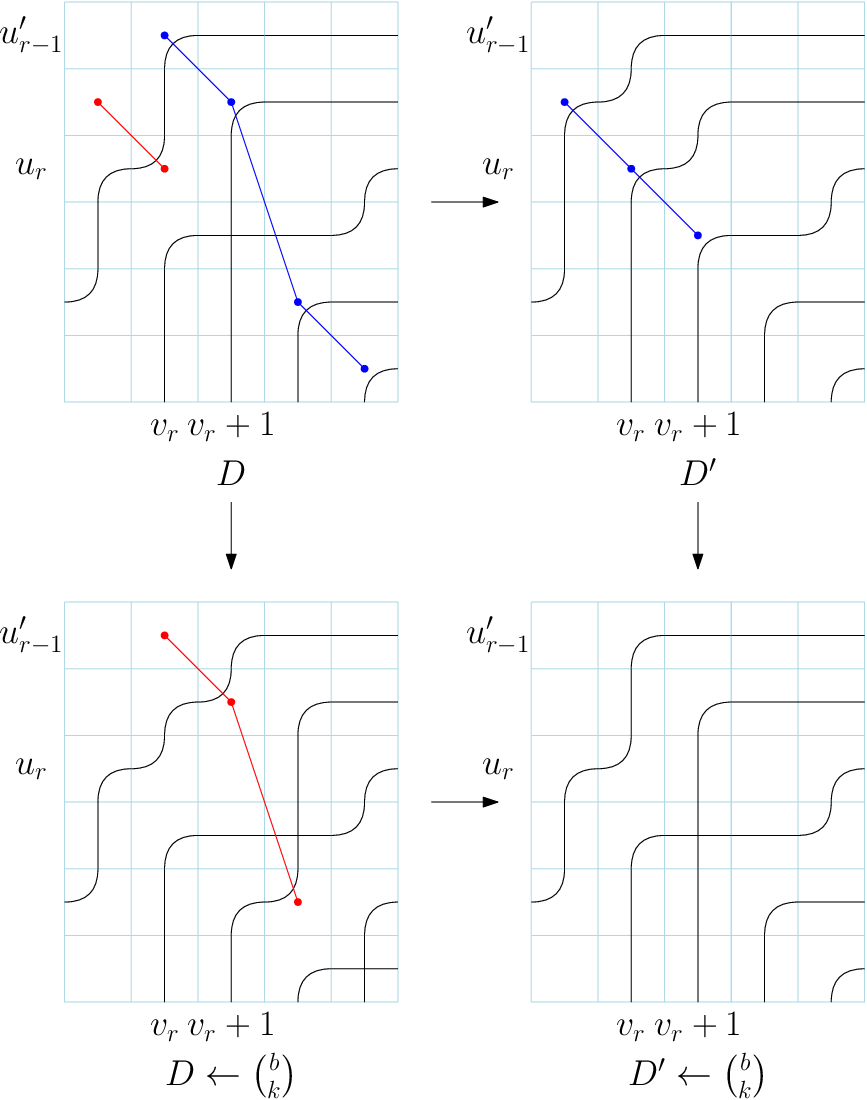}
                \captionof{figure}{$\mathcal{I}$ has exactly one more $\maxdroop$}
                \label{fig:jdt-commute-2e}
            \end{minipage}
        \end{figure}

        This completes the proof.
    \end{proof}

\bibliographystyle{alpha}
\bibliography{biblio}
\end{document}